\newtheorem{theorem}{Theorem}[section]
\newtheorem{proposition}[theorem]{Proposition}
\newtheorem{lemma}[theorem]{Lemma}
\newtheorem{corollary}[theorem]{Corollary}
\theoremstyle{definition}
\theoremstyle{remark}
\numberwithin{equation}{section}
\begin{document}

\title{A double fibration transform for complex projective space}

\author{Michael Eastwood}
\address{Mathematical Sciences Institute, Australian National University,
ACT 0200}
\curraddr{}
\email{meastwoo@member.ams.org}
\thanks{The author is supported by the Australian Research Council} 

\subjclass[2010]{Primary 32L25; Secondary 53C28}
\date{}

\dedicatory{To Sigurdur Helgason on the occasion of his eighty-fifth birthday.}

\begin{abstract}
We develop some theory of double fibration transforms where the cycle 
space is a smooth manifold and apply it to complex projective space.
\end{abstract}

\maketitle

\section{Introduction}\label{intro}
The classical {\em Penrose transform\/} is concerned with (anti)-self-dual
$4$-dimensional Riemannian manifolds. If $M$ is such a manifold then, as shown
in~\cite{AHS}, there is a canonically defined $3$-dimensional complex
manifold~$Z$, known as the {\em twistor space\/} of~$M$, that fibres over $M$
\begin{equation}\label{twistorfibration}\tau:Z\to M\end{equation}
in the sense that $\tau$ is a submersion with holomorphic fibres intrinsically
isomorphic to~${\mathbb{CP}}_1$. In fact, this construction depends only on the
conformal structure on~$M$ and the Penrose transform then identifies the
Dolbeault cohomology $H^r(Z,{\mathcal{O}}(V))$ for the various natural
holomorphic vector bundles $V$ on $Z$ with the cohomology of certain
conformally invariant elliptic complexes of linear differential operators 
on~$M$. Some typical examples are presented in~\cite{ES,H}.

The two main examples of this construction are for $M=S^4$, the flat model of
$4$-dimensional conformal geometry, and for $M={\mathbb{CP}}_2$ with its
Fubini-Study metric. In both cases, the twistor space is a well-known complex
manifold. For $S^4$ it is ${\mathbb{CP}}_3$ and for ${\mathbb{CP}}_2$ it is 
the flag manifold
$${\mathbb{F}}_{1,2}({\mathbb{C}}^3)\equiv
\{(L,P)\mid L\subset P\subset{\mathbb{C}}^3\mbox{ with }
\dim_{\mathbb{C}}L=1,\dim_{\mathbb{C}}P=2\}.$$
For ${\mathbb{CP}}_2$ the fibration is
\begin{equation}\label{CP2twistors}
{\mathbb{F}}_{1,2}({\mathbb{C}}^3)\ni(L,P)\stackrel{\tau}{\longmapsto}
L^\perp\cap P\in{\mathbb{CP}}_2,\end{equation}
where the orthogonal complement $L^\perp$ of $L$ is taken with respect to a
fixed Hermitian inner product on~${\mathbb{C}}^3$, namely the same inner
product that induces the Fubini-Study metric on ${\mathbb{CP}}_2$ as a
homogeneous space ${\mathrm{SU(3)}}/{\mathrm{S(U}}(1)\times{\mathrm{U}}(2))$. 
The Penrose transform in this setting is carried out in detail in~\cite{B,E}.

There are several options for generalising this twistor geometry of
${\mathbb{CP}}_2$ to higher dimensions. Perhaps the most obvious is to take as
twistor space the flag manifold ${\mathbb{F}}_{1,2}({\mathbb{C}}^{n+1})$ and
define $\tau:{\mathbb{F}}_{1,2}({\mathbb{C}}^{n+1})\to{\mathbb{CP}}_n$ by
$(L,P)\mapsto L^\perp\cap P$. This is the option adopted in~\cite{me}. Perhaps
a more balanced option is to take as twistor space the flag manifold
$$Z={\mathbb{F}}_{1,n}({\mathbb{C}}^{n+1})\equiv
\{(L,H)\mid L\subset H\subset{\mathbb{C}}^{n+1}\mbox{ with }
\dim_{\mathbb{C}}L=1,\dim_{\mathbb{C}}H=n\}$$
and consider the {\em double fibration}
\begin{equation}\label{balanced}\raisebox{-20pt}{\begin{picture}(60,30)
\put(0,5){\makebox(0,0){$Z$}}
\put(30,35){\makebox(0,0){$X$}}
\put(60,5){\makebox(0,0){${\mathbb{CP}}_n$}}
\put(25,30){\vector(-1,-1){18}}
\put(35,30){\vector(1,-1){18}}
\put(12,24){\makebox(0,0){$\eta$}}
\put(48,24){\makebox(0,0){$\tau$}}
\end{picture}}\end{equation}
where $X\subset{\mathbb{F}}_{1,n}({\mathbb{C}}^{n+1})\times{\mathbb{CP}}_n$ is
the {\em incidence variety\/} given by
\begin{equation}\label{incidence}
X=\{(L,H,\ell)\mid\ell\subseteq L^\perp\cap H\}\end{equation}
and the fibrations $\eta$ and $\tau$ are the {\em forgetful mappings\/},
$${\mathbb{F}}_{1,n}({\mathbb{C}}^{n+1})\ni(L,H)
\stackrel{\,\eta\,}{\longleftarrow\!\mapstochar}
(L,H,\ell)\stackrel{\,\tau\,}{\longmapsto}\ell\in{\mathbb{CP}}_n.$$
Of course, when $n=2$ the dimensions force $\eta$ to be an isomorphism and this
double fibration~(\ref{balanced}) reverts to the single
fibration~(\ref{CP2twistors}).

The aim of this article is to explain a transform on Dolbeault cohomology for
double fibrations of this type and then execute the transform in this
particular case. Then, since the Bott-Borel-Weil Theorem~\cite{bott} computes
the Dolbeault cohomology of $Z={\mathbb{F}}_{1,n}({\mathbb{C}}^{n+1})$ with
coefficients in any homogeneous vector bundle, we may draw conclusions
concerning the cohomology of various elliptic complexes on~${\mathbb{CP}}_n$.

This work was outlined at the meeting `Geometric Analysis on Euclidean and
Homogeneous Spaces' held at Tufts University in January 2012\@. The author is
grateful to the organisers, Jens Christensen, Fulton Gonzalez, and Todd Quinto,
for their invitation to speak and hospitality at that meeting and also to
Joseph~Wolf for many crucial conversations concerning this work.

\section{The general transform}\label{general}
There is a better established double fibration transform defined for 
$$\begin{picture}(60,31)(0,7)
\put(0,5){\makebox(0,0){$Z$}}
\put(30,35){\makebox(0,0){${\mathbb{X}}$}}
\put(60,5){\makebox(0,0){${\mathbb{M}}$}}
\put(25,30){\vector(-1,-1){18}}
\put(35,30){\vector(1,-1){18}}
\put(12,24){\makebox(0,0){$\mu$}}
\put(48,24){\makebox(0,0){$\nu$}}
\end{picture}$$
in which all manifolds are complex and both $\mu$ and $\nu$ are holomorphic.
Classical twistor theory, for example, is concerned with the holomorphic
correspondence
$$\begin{picture}(60,34)(0,7)
\put(0,5){\makebox(0,0){${\mathbb{CP}}_3$}}
\put(30,37){\makebox(0,0){${\mathbb{F}}_{1,2}({\mathbb{C}}^4)$}}
\put(60,5){\makebox(0,0){${\mathrm{Gr}}_2({\mathbb{C}}^4)$.}}
\put(25,30){\vector(-1,-1){18}}
\put(35,30){\vector(1,-1){18}}
\put(12,24){\makebox(0,0){$\mu$}}
\put(48,24){\makebox(0,0){$\nu$}}
\end{picture}$$
The Penrose transform in this setting is explained in~\cite{EPW} and
generalised to arbitrary holomorphic correspondences between complex flag
manifolds in~\cite{BE}. Another vast generalisation is concerned with the 
holomorphic correspondences arising from the cycle spaces of general flag 
domains as in~\cite{FHW}.

On the face of it, the double fibration (\ref{balanced}) is of a different
nature since ${\mathbb{CP}}_n$ is only to be considered as a smooth manifold.
In fact, a link will emerge with the complex correspondences and this will ease
some of the computations involved. For the moment, however, let us develop some
general machinery applicable to this {\em smooth\/} setting. This machinery is
a generalisation of the Penrose transform for a single
fibration~(\ref{twistorfibration}), which goes as follows. The only 
requirements on (\ref{twistorfibration}) are that $\tau$ should be a smooth 
submersion from a complex manifold $Z$ to a smooth manifold $M$ and that the 
fibres of $\tau$ should be compact complex submanifolds of~$Z$. 

Let us denote by $\Lambda_Z^{0,q}$ the bundle of $(0,q)$-forms on~$Z$ and by
$\bar\partial_Z:\Lambda_Z^{0,q}\to\Lambda_Z^{0,q+1}$ the
$\bar\partial$-operator on $Z$ so that
$$H^r(Z,{\mathcal{O}})\equiv 
H^r(\Gamma(Z,\Lambda_Z^{0,\bullet}),\bar\partial_Z)$$
is the Dolbeault cohomology of~$Z$. The $1$-forms along the fibres of~$\tau$, 
defined by the short exact sequence
$$0\to\tau^*\Lambda_M^1\to\Lambda_Z^1\to\Lambda_\tau^1\to 0,$$
are decomposed as $\Lambda_\tau^1=\Lambda_\tau^{0,1}\oplus\Lambda_\tau^{1,0}$
by the complex structure on these fibres and the fact that this complex
structure is acquired from that on $Z$ implies that there is a commutative
diagram 
\begin{equation}\label{acquired}\begin{array}{ccccccccc} 0&\to&\Lambda^{0,0}
&\xrightarrow{\,\bar\partial_\tau\,}&\Lambda_\tau^{0,1}
&\xrightarrow{\,\bar\partial_\tau\,}&\Lambda_\tau^{0,2}
&\xrightarrow{\,\bar\partial_\tau\,}&\cdots\\
&&\|&&\begin{array}c\uparrow\\[-10pt] \uparrow\end{array}&&
\begin{array}c\uparrow\\[-10pt] \uparrow\end{array}\\
0&\to&\Lambda^{0,0}
&\xrightarrow{\,\bar\partial_Z\,}&\Lambda_Z^{0,1}
&\xrightarrow{\,\bar\partial_Z\,}&\Lambda_Z^{0,2}
&\xrightarrow{\,\bar\partial_Z\,}&\cdots\end{array}\end{equation}
where the top row is the $\bar\partial$-complex along the fibres of~$\tau$. 
Though the notation may seem bizarre at first, let us define the bundle 
$\Lambda_\mu^{1,0}$ on $Z$ by the short exact sequence
\begin{equation}\label{bizarre}
0\to\Lambda_\mu^{1,0}\to\Lambda_Z^{0,1}\to\Lambda_\tau^{0,1}\to 0.
\end{equation}
Regarded as a filtration of~$\Lambda_Z^{0,1}$, this short exact sequence 
induces filtrations on $\Lambda_Z^{0,q}$ for all $q$ and (\ref{acquired}) 
implies that $\bar\partial_Z$ is compatible with this filtration. An immediate 
consequence is that the bundle $\Lambda_\mu^{1,0}$ acquires a holomorphic 
structure along the fibres of~$\tau$. To see this by hand, one notes that the 
composition
$$\Lambda_\mu^{1,0}\to\Lambda_Z^{0,1}\xrightarrow{\,\bar\partial_Z\,}
\Lambda_Z^{0,2}\to\Lambda_\tau^{0,2}$$
vanishes by dint of the definition (\ref{bizarre}) of $\Lambda_\mu^{1,0}$ and
the commutative diagram~(\ref{acquired}) whence the short exact sequence
$$0\to\Lambda_\mu^{2,0}\to[\ker:\Lambda_Z^{0,2}\to\Lambda_\tau^{0,2}]\to
\Lambda_\tau^{0,1}\otimes\Lambda_\mu^{1,0}\to 0$$
induced by (\ref{bizarre}) implies that $\bar\partial_Z|_{\Lambda_\mu^{1,0}}$
induces an operator 
$$\bar\partial_\tau:
\Lambda_\mu^{1,0}\to\Lambda_\tau^{0,1}\otimes\Lambda_\mu^{1,0},$$ 
as required. To see this (and much more) by machinery, one employs the spectral
sequence of the filtered complex~$\Lambda_Z^{0,\bullet}$, arriving at the
$E_0$-level 
$$\begin{picture}(260,110)
\put(2,1){\makebox(0,0){$\Lambda^{0,0}$}}
\put(2,31){\makebox(0,0){$\Lambda_\tau^{0,1}$}}
\put(2,61){\makebox(0,0){$\Lambda_\tau^{0,2}$}}
\put(2,91){\makebox(0,0){$\Lambda_\tau^{0,3}$}}
\put(72,1){\makebox(0,0){$\Lambda_\mu^{1,0}$}}
\put(72,31){\makebox(0,0){$\Lambda_\tau^{0,1}\otimes\Lambda_\mu^{1,0}$}}
\put(72,61){\makebox(0,0){$\Lambda_\tau^{0,2}\otimes\Lambda_\mu^{1,0}$}}
\put(152,1){\makebox(0,0){$\Lambda_\mu^{2,0}$}}
\put(152,31){\makebox(0,0){$\Lambda_\tau^{0,1}\otimes\Lambda_\mu^{2,0}$}}
\put(222,1){\makebox(0,0){$\Lambda_\mu^{3,0}$}} 
\put(15,0){\line(1,0){44}} 
\put(85,0){\line(1,0){54}}
\put(165,0){\line(1,0){44}} 
\put(235,0){\vector(1,0){25}}
\put(0,7){\vector(0,1){17}}
\put(0,38){\vector(0,1){16}} 
\put(0,68){\vector(0,1){16}}
\put(0,98){\vector(0,1){12}} 
\put(70,8){\vector(0,1){16}}
\put(70,38){\vector(0,1){16}} 
\put(70,68){\vector(0,1){12}}
\put(150,8){\vector(0,1){16}} 
\put(150,38){\vector(0,1){12}}
\put(220,8){\vector(0,1){12}}
\put(6,15){\makebox(0,0){\scriptsize$\bar\partial_\tau$}}
\put(6,45){\makebox(0,0){\scriptsize$\bar\partial_\tau$}}
\put(6,75){\makebox(0,0){\scriptsize$\bar\partial_\tau$}}
\put(76,15){\makebox(0,0){\scriptsize$\bar\partial_\tau$}}
\put(76,45){\makebox(0,0){\scriptsize$\bar\partial_\tau$}}
\put(156,15){\makebox(0,0){\scriptsize$\bar\partial_\tau$}}
\put(255,5){\makebox(0,0){\scriptsize$p$}}
\put(5,105){\makebox(0,0){\scriptsize$q$}}
\end{picture}$$
and, in particular, the differential
$\Lambda_\mu^{1,0}\xrightarrow{\,\bar\partial_\tau\,}
\Lambda_\tau^{0,1}\otimes\Lambda_\mu^{1,0}$. This spectral
sequence for $\Gamma(Z,\Lambda_Z^{0,\bullet})$, at the $E_1$-level, reads
$$\begin{picture}(280,110)(0,-2)
\put(2,1){\makebox(0,0){$\Gamma(M,\tau_*\Lambda_\mu^{0,0})$}}
\put(2,31){\makebox(0,0){$\Gamma(M,\tau_*^1\Lambda_\mu^{0,0})$}}
\put(2,61){\makebox(0,0){$\Gamma(M,\tau_*^2\Lambda_\mu^{0,0})$}}
\put(2,91){\makebox(0,0){$\Gamma(M,\tau_*^3\Lambda_\mu^{0,0})$}}
\put(82,1){\makebox(0,0){$\Gamma(M,\tau_*\Lambda_\mu^{1,0})$}}
\put(82,31){\makebox(0,0){$\Gamma(M,\tau_*^1\Lambda_\mu^{1,0})$}}
\put(82,61){\makebox(0,0){$\Gamma(M,\tau_*^2\Lambda_\mu^{1,0})$}}
\put(162,1){\makebox(0,0){$\Gamma(M,\tau_*\Lambda_\mu^{2,0})$}}
\put(162,31){\makebox(0,0){$\Gamma(M,\tau_*^1\Lambda_\mu^{2,0})$}}
\put(242,1){\makebox(0,0){$\Gamma(M,\tau_*\Lambda_\mu^{3,0})$}} 
\put(35,1){\vector(1,0){14}} 
\put(115,1){\vector(1,0){14}}
\put(195,1){\vector(1,0){14}} 
\put(275,1){\vector(1,0){25}}
\put(35,31){\vector(1,0){14}} 
\put(115,31){\vector(1,0){14}}
\put(35,61){\vector(1,0){14}}
\put(0,7){\line(0,1){17}}
\put(0,38){\line(0,1){16}} 
\put(0,68){\line(0,1){16}}
\put(0,98){\vector(0,1){12}} 
\put(295,6){\makebox(0,0){\scriptsize$p$}}
\put(5,105){\makebox(0,0){\scriptsize$q$}}
\end{picture}$$
where $\tau_*^q\Lambda_\mu^{p,0}$ is the $q^{\mathrm{th}}$ direct image of the
vector bundle $\Lambda_\mu^{p,0}$ with respect to its holomorphic structure in
the fibre directions. Note that, with the fibres of $\tau$ being compact, these
direct images generically define smooth vector bundles on~$M$ and certainly
this will be the case when the fibration (\ref{twistorfibration}) is
homogeneous. In any case, we have proved the following. 
\begin{theorem}\label{basic}
Suppose that $\tau:Z\to M$ is a submersion of smooth manifolds and that $Z$ has
a complex structure such that the fibres of $\tau$ are compact complex
submanifolds of~$Z$. Then the bundle
$\Lambda_\mu^{1,0}\equiv\ker:\Lambda_Z^{0,1}\to\Lambda_\tau^{0,1}$ acquires a
natural holomorphic structure along the fibres of $\tau$ and there is a
spectral sequence
$$E_1^{p,q}=\Gamma(M,\tau_*^q\Lambda_\mu^{p,0})\Longrightarrow
H^{p+q}(Z,{\mathcal{O}}).$$
\end{theorem}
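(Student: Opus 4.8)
The plan is to recognise the asserted spectral sequence as that of the filtered complex $\bigl(\Gamma(Z,\Lambda_Z^{0,\bullet}),\bar\partial_Z\bigr)$ for the filtration coming from the short exact sequence~(\ref{bizarre}). Concretely, write $\Lambda_\mu^{p,0}\equiv\Lambda^p\Lambda_\mu^{1,0}$ and let $F^p\Lambda_Z^{0,q}$ be the image of $\Lambda_\mu^{p,0}\otimes\Lambda^{q-p}\Lambda_Z^{0,1}\to\Lambda_Z^{0,q}$, so that $\Lambda_Z^{0,q}=F^0\supseteq F^1\supseteq\cdots\supseteq F^{q+1}=0$ and, by the standard linear algebra of an exterior power of an extension, $F^p\Lambda_Z^{0,q}/F^{p+1}\Lambda_Z^{0,q}\cong\Lambda_\mu^{p,0}\otimes\Lambda_\tau^{0,q-p}$. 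First I would check that $\bar\partial_Z$ respects this filtration. For $p=1$ this is precisely the observation already recorded in the text: for a local section $\alpha$ of $\Lambda_\mu^{1,0}$ the diagram~(\ref{acquired}) forces the image of $\bar\partial_Z\alpha$ in $\Lambda_\tau^{0,2}$ to vanish, i.e.\ $\bar\partial_Z\alpha\in F^1\Lambda_Z^{0,2}$. The Leibniz rule then propagates this to all $p$: writing a local section of $F^p\Lambda_Z^{0,q}$ as a sum of terms $\alpha_1\wedge\cdots\wedge\alpha_p\wedge\beta$ with $\alpha_i\in\Gamma(\Lambda_\mu^{1,0})$ and applying $\bar\partial_Z$ produces only terms still lying in $F^p\Lambda_Z^{0,q+1}$.

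Next I would identify the associated graded complex $\mathrm{gr}^p\Lambda_Z^{0,\bullet}\cong\Lambda_\mu^{p,0}\otimes\Lambda_\tau^{0,\bullet-p}$. Taking $p=1$, the differential induced by $\bar\partial_Z$ carries $\Lambda_\mu^{1,0}$ to $\Lambda_\mu^{1,0}\otimes\Lambda_\tau^{0,1}$; it satisfies the Leibniz rule $\bar\partial_\tau(f\sigma)=(\bar\partial_\tau f)\otimes\sigma+f\,\bar\partial_\tau\sigma$, the extra term being the class of $(\bar\partial_Zf)\wedge\sigma$ in $\mathrm{gr}^1\Lambda_Z^{0,2}$, and it squares to zero because it is the differential on the associated graded of a complex. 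By the standard correspondence between integrable $\bar\partial$-operators and holomorphic structures, applied fibre by fibre, this is the $\bar\partial$-operator of a holomorphic structure on $\Lambda_\mu^{1,0}$ along the fibres of $\tau$, which already proves the first assertion. Its exterior powers equip each $\Lambda_\mu^{p,0}$ with such a structure, and then, using the compatibility $\bar\partial_\tau\circ(\text{restriction})=(\text{restriction})\circ\bar\partial_Z$ from~(\ref{acquired}) together with the Leibniz computation above, a routine local check identifies the graded differential on $\mathrm{gr}^p\Lambda_Z^{0,\bullet}$ with $\bar\partial_\tau$ coupled to the fibrewise-holomorphic bundle $\Lambda_\mu^{p,0}$.

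It remains to run the spectral sequence. The filtration is bounded in each degree and $\Lambda_Z^{0,\bullet}$ is a bounded complex, so the spectral sequence of $\bigl(\Gamma(Z,\Lambda_Z^{0,\bullet}),\bar\partial_Z\bigr)$ converges to $H^{p+q}(Z,\mathcal{O})$. Its $E_0$-term in filtration degree $p$ is $\Gamma\bigl(Z,\Lambda_\tau^{0,\bullet}\otimes\Lambda_\mu^{p,0}\bigr)$ with differential $\bar\partial_\tau$, so $E_1^{p,q}=H^q\bigl(\Gamma(Z,\Lambda_\tau^{0,\bullet}\otimes\Lambda_\mu^{p,0}),\bar\partial_\tau\bigr)$, and one must see that this equals $\Gamma(M,\tau_*^q\Lambda_\mu^{p,0})$. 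The key point is that $\bar\partial_\tau$ annihilates functions pulled back from $M$, hence is $\tau^*C_M^\infty$-linear, so pushing the complex forward along $\tau$ yields a complex of sheaves of $C_M^\infty$-modules whose cohomology sheaves are by definition the direct images $\tau_*^q\Lambda_\mu^{p,0}$; every sheaf in sight, including the kernels and images, is then fine and hence $\Gamma(M,-)$-acyclic, so global sections over $M$ commute with cohomology (and when these cohomology sheaves are locally free, automatically in the homogeneous case, they are genuine vector bundles, as asserted). I expect the main obstacle to be this last interchange, and upstream of it the need to pin down that the graded differential is genuinely $\bar\partial_\tau$ coupled to $\Lambda_\mu^{p,0}$ rather than merely a first-order operator with the same symbol; the remainder is the formalism of spectral sequences of filtered complexes.
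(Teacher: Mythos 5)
Your proposal is correct and follows essentially the same route as the paper: the paper likewise filters the complex $\Lambda_Z^{0,\bullet}$ by the subbundle $\Lambda_\mu^{1,0}$ of~(\ref{bizarre}), uses~(\ref{acquired}) to see that $\bar\partial_Z$ respects the filtration and induces the fibrewise holomorphic structure (its ``by hand'' computation is your $p=1$ case), and reads off the stated spectral sequence from the resulting filtered complex. The details you supply---the explicit description of the filtration, the Leibniz propagation to all $p$, and the softness argument identifying $E_1^{p,q}$ with $\Gamma(M,\tau_*^q\Lambda_\mu^{p,0})$---are precisely what the paper leaves implicit.
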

This theorem only comes to life with examples in which it is possible to 
compute the direct images~$\tau_*^q\Lambda_\mu^{p,0}$. There is also a coupled 
version of the spectral sequence 
$$E_1^{p,q}=\Gamma(M,\tau_*^q\Lambda_\mu^{p,0}(V))\Longrightarrow
H^{p+q}(Z,{\mathcal{O}}(V))$$
for any holomorphic vector bundle $V$ on~$Z$. The proof is easily
modified but the added scope for interesting examples is significantly 
increased. 

For the moment, however, let us continue with generalities, firstly by
extending Theorem~\ref{basic} to cover double fibrations of the form
\begin{equation}\label{smoothdoublefibration}
\raisebox{-20pt}{\begin{picture}(60,40)
\put(0,5){\makebox(0,0){$Z$}}
\put(30,35){\makebox(0,0){$X$}}
\put(60,5){\makebox(0,0){$M$}}
\put(25,30){\vector(-1,-1){18}}
\put(35,30){\vector(1,-1){18}}
\put(12,24){\makebox(0,0){$\eta$}}
\put(48,24){\makebox(0,0){$\tau$}}
\end{picture}}\end{equation}
(of which (\ref{CP2twistors}) is typical) where $M$ is smooth and the fibres of
$\tau$ are identified by $\eta$ as compact complex submanifolds of the complex
manifold~$Z$. To do this, let us define a bundle $\Lambda_X^{0,1}$ on $X$ by
means of the short exact sequence
\begin{equation}\label{involutive}
0\to\eta^*\Lambda_Z^{1,0}\to\Lambda_X^1\to\Lambda_X^{0,1}\to 0,\end{equation}
where $\Lambda_X^1$ is the bundle of complex-valued $1$-forms on~$X$.
Geometrically, this pulls back the complex structure from $Z$ to an 
{\em involutive structure\/}~\cite{BCH} on~$X$. In particular, there is an 
induced complex of differential operators
$$0\to\Lambda_X^{0,0}\xrightarrow{\,\bar\partial_X\,}\Lambda_X^{0,1}
\xrightarrow{\,\bar\partial_X\,}\Lambda_X^{0,2}
\xrightarrow{\,\bar\partial_X\,}\cdots.$$
Comparing (\ref{involutive}) with the bundle $\Lambda_\eta^1$ of $1$-forms 
along the fibres of $\eta$
defined by the short exact sequence
$$0\to\eta^*\Lambda_Z^1\to\Lambda_X^1\to\Lambda_\eta^1\to 0$$
we see that there is a short exact sequence
$$0\to\eta^*\Lambda_Z^{0,1}\to\Lambda_X^{0,1}\to\Lambda_\eta^1\to 0.$$
The complex $\Gamma(X,\Lambda_X^{0,\bullet})$ thereby acquires a filtration,
the spectral sequence for which reads at the $E_0$-level
\begin{equation}\label{topology}
\raisebox{-17pt}{\begin{picture}(280,115)(0,-5)
\put(2,1){\makebox(0,0){$\Gamma(X,\eta^*\Lambda_Z^{0,0})$}}
\put(2,31){\makebox(0,0){$\Gamma(X,
\Lambda_\eta^1\otimes\eta^*\Lambda_Z^{0,0})$}}
\put(2,61){\makebox(0,0){$\Gamma(X,
\Lambda_\eta^2\otimes\eta^*\Lambda_Z^{0,0})$}}
\put(2,91){\makebox(0,0){$\Gamma(X,
\Lambda_\eta^3\otimes\eta^*\Lambda_Z^{0,0})$}}
\put(92,1){\makebox(0,0){$\Gamma(X,\eta^*\Lambda_Z^{0,1})$}}
\put(92,31){\makebox(0,0){$\Gamma(X,
\Lambda_\eta^1\otimes\eta^*\Lambda_Z^{0,1})$}}
\put(92,61){\makebox(0,0){$\Gamma(X,
\Lambda_\eta^2\otimes\eta^*\Lambda_Z^{0,1})$}}
\put(182,1){\makebox(0,0){$\Gamma(X,\eta^*\Lambda_Z^{0,2})$}}
\put(182,31){\makebox(0,0){$\Gamma(X,
\Lambda_\eta^1\otimes\eta^*\Lambda_Z^{0,2})$}}
\put(262,1){\makebox(0,0){$\Gamma(X,\eta^*\Lambda_Z^{0,3})$}}
\put(35,1){\line(1,0){24}} \put(125,1){\line(1,0){24}}
\put(215,1){\line(1,0){14}} \put(295,1){\vector(1,0){13}}
\put(0,7){\vector(0,1){17}} \put(0,38){\vector(0,1){16}}
\put(0,68){\vector(0,1){16}} \put(0,98){\vector(0,1){12}}
\put(90,7){\vector(0,1){17}} \put(90,38){\vector(0,1){16}}
\put(90,68){\vector(0,1){12}} \put(180,7){\vector(0,1){17}}
\put(180,38){\vector(0,1){12}} \put(6,15){\makebox(0,0){\scriptsize$d_\eta$}}
\put(6,45){\makebox(0,0){\scriptsize$d_\eta$}}
\put(6,75){\makebox(0,0){\scriptsize$d_\eta$}}
\put(96,15){\makebox(0,0){\scriptsize$d_\eta$}}
\put(96,45){\makebox(0,0){\scriptsize$d_\eta$}}
\put(186,15){\makebox(0,0){\scriptsize$d_\eta$}}
\put(303,6){\makebox(0,0){\scriptsize$p$}}
\put(5,105){\makebox(0,0){\scriptsize$q$}}
\end{picture}}\end{equation}
where $d_\eta:\Lambda_\eta^q\otimes\eta^*\Lambda^{0,p}\to
\Lambda_\eta^{q+1}\otimes\eta^*\Lambda^{0,p}$ is the exterior derivative along 
the fibres of $\eta$ coupled with the pullback bundle $\eta^*\Lambda^{0,p}$. 
Notice that such a coupling 
$$d_\eta:\eta^*V\to\Lambda_\eta^1\otimes\eta^*V\quad\mbox{and hence}\quad
d_\eta:\Lambda_\eta^q\otimes\eta^*V\to\Lambda_\eta^{q+1}\otimes\eta^*V$$
is valid for any smooth vector bundle $V$ on $Z$ because the pullback
$\eta^*V$ may be defined by transition functions that are constant along the
fibres, hence annihilated by~$d_\eta$. When the fibres of $\eta$ are
contractible, this is exactly the setting in which Buchdahl's
theorem~\cite{npb} applies and we deduce the following.
\begin{proposition}
Suppose that the fibres of $\eta:X\to Z$ are contractible. Then 
$$0\to\Gamma(Z,V)\to\Gamma(X,\eta^*V)
\xrightarrow{\,d_\eta\,}\Gamma(X,\Lambda_\eta^1\otimes\eta^*V)
\xrightarrow{\,d_\eta\,}\Gamma(X,\Lambda_\eta^2\otimes\eta^*V)
\xrightarrow{\,d_\eta\,}\cdots$$
is exact for any smooth vector bundle $V$ on~$Z$.
\end{proposition}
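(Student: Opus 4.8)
The plan is to identify the $d_\eta$-complex with the relative de~Rham complex of the fibration $\eta\colon X\to Z$ twisted by $\eta^*V$, for which the contractibility of the fibres is exactly the hypothesis of Buchdahl's theorem~\cite{npb}; I shall indicate the underlying mechanism. Write $\mathcal{E}_X,\mathcal{E}_Z$ for the sheaves of germs of smooth functions on $X,Z$ and $\mathcal{E}_X(W),\mathcal{E}_Z(W)$ for the sheaves of germs of smooth sections of a bundle~$W$. The first point is local on~$X$: over a small open set on which $\eta$ is a projection $U\times{\mathbb{R}}^k\to U$ and $V$ is trivialised, the twisted complex $\big(\Lambda_\eta^\bullet\otimes\eta^*V,\,d_\eta\big)$ is a finite direct sum of copies of the de~Rham complex in the ${\mathbb{R}}^k$-directions with parameters in~$U$, so by the Poincar\'e lemma with parameters it resolves the functions pulled back from~$U$. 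Hence the sequence of sheaves on~$X$
$$0\to\eta^{-1}\mathcal{E}_Z(V)\to\mathcal{E}_X(\eta^*V)
\xrightarrow{\,d_\eta\,}\mathcal{E}_X(\Lambda_\eta^1\otimes\eta^*V)
\xrightarrow{\,d_\eta\,}\mathcal{E}_X(\Lambda_\eta^2\otimes\eta^*V)
\xrightarrow{\,d_\eta\,}\cdots$$
is exact, the augmentation identifying $\ker d_\eta$ with the sheaf-theoretic pullback $\eta^{-1}\mathcal{E}_Z(V)$, namely the sheaf of smooth sections of $\eta^*V$ that are constant along the fibres of~$\eta$; here one uses that the fibres, being contractible, are connected, so that such sections descend to~$Z$, and smoothly, because $\eta$ admits local smooth sections.

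Each $\mathcal{E}_X(\Lambda_\eta^q\otimes\eta^*V)$ is a module over~$\mathcal{E}_X$, hence a fine, and in particular acyclic, sheaf on~$X$. Splitting the resolution above into short exact sequences and running the associated long exact cohomology sequences in the standard way shows that the cohomology of the complex of global sections $\Gamma\big(X,\Lambda_\eta^\bullet\otimes\eta^*V\big)$ computes the sheaf cohomology $H^\bullet\big(X,\eta^{-1}\mathcal{E}_Z(V)\big)$. It therefore suffices to prove that this cohomology equals $\Gamma(Z,V)$ in degree~$0$ and vanishes in positive degrees.

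This is the substantive point and is exactly what~\cite{npb} delivers. Because the fibres of $\eta$ are contractible one has $\eta_*\eta^{-1}\mathcal{E}_Z(V)=\mathcal{E}_Z(V)$ and $R^q\eta_*\eta^{-1}\mathcal{E}_Z(V)=0$ for $q>0$, so the Leray spectral sequence of $\eta$ collapses to give $H^\bullet\big(X,\eta^{-1}\mathcal{E}_Z(V)\big)\cong H^\bullet\big(Z,\mathcal{E}_Z(V)\big)$; and $\mathcal{E}_Z(V)$ is itself fine on~$Z$, so the right-hand side is $\Gamma(Z,V)$ concentrated in degree~$0$. Assembling the pieces yields the asserted exactness. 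I expect the only real obstacle to be the vanishing $R^q\eta_*\eta^{-1}\mathcal{E}_Z(V)=0$ for $q>0$: it is here, rather than in any formal manipulation, that contractibility of the fibres (as opposed to mere acyclicity) is genuinely used, and where one must appeal to~\cite{npb}. The twist by $\eta^*V$ adds nothing, since $\eta^*V$ is built from transition functions constant along the fibres and so is handled uniformly by the same argument.
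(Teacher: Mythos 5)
Your argument is correct and in substance coincides with the paper's: the paper offers no proof beyond the remark that contractibility of the fibres is exactly the hypothesis under which Buchdahl's theorem \cite{npb} applies, and your derivation (relative Poincar\'e lemma with parameters, identification of $\ker d_\eta$ with $\eta^{-1}\mathcal{E}_Z(V)$ via connectedness of the fibres, fineness of the $\mathcal{E}_X$-module sheaves, collapse of the Leray spectral sequence) is the standard route to that theorem, so you have reconstructed the proof the paper leaves to the reference. The only quibble is attributional: the step you single out as the one needing \cite{npb}, namely $R^q\eta_*\eta^{-1}\mathcal{E}_Z(V)=0$ for $q>0$, is just homotopy invariance of sheaf cohomology for inverse-image sheaves over a locally trivial fibration with contractible fibre, and does not itself require Buchdahl; what \cite{npb} actually supplies is the sharper if-and-only-if statement for fibres that are merely connected with vanishing de~Rham cohomology in a range of degrees, none of which is needed once contractibility is assumed. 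Your treatment of the twist by $\eta^*V$ (transition functions constant along the fibres) matches the paper's. If anything, your proof is more self-contained than the one in the text.
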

In this case, our spectral sequence (\ref{topology}) collapses at the
$E_1$-level and we have proved the following.
\begin{proposition}\label{pullback}
Suppose that the fibres of $\eta:X\to Z$ are contractible. Then 
$$H^r(Z,{\mathcal{O}})\cong 
H^r(\Gamma(X,\Lambda_X^{0,\bullet}),\bar\partial_X)\enskip\mbox{for all}
\enskip r=0,1,2,\ldots.$$
\end{proposition}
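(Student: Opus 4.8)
The plan is to play the Proposition just proved off against the spectral sequence~(\ref{topology}). The complex $(\Gamma(X,\Lambda_X^{0,\bullet}),\bar\partial_X)$ carries the filtration constructed above, whose associated spectral sequence has $E_0$-page the array displayed in~(\ref{topology}), that is $E_0^{p,q}=\Gamma(X,\Lambda_\eta^q\otimes\eta^*\Lambda_Z^{0,p})$ with $d_0=d_\eta$; since $X$ is finite-dimensional this filtration is bounded, so the spectral sequence converges to $H^\bullet(\Gamma(X,\Lambda_X^{0,\bullet}),\bar\partial_X)$. First I would read off the $E_1$-page. Applying the preceding Proposition with the smooth vector bundle $V=\Lambda_Z^{0,p}$ on $Z$ exhibits the $p^{\mathrm{th}}$ column of~(\ref{topology}) as a resolution of $\Gamma(Z,\Lambda_Z^{0,p})$; hence $E_1^{p,q}=0$ for $q>0$ while $E_1^{p,0}=\Gamma(Z,\Lambda_Z^{0,p})$, and the spectral sequence is concentrated in the single row $q=0$.

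Next I would identify the edge differential $d_1\colon E_1^{p,0}\to E_1^{p+1,0}$ with~$\bar\partial_Z$. A class in $E_1^{p,0}$ is represented by a pulled-back form $\eta^*\omega$ with $\omega\in\Gamma(Z,\Lambda_Z^{0,p})$, lying in the bottom piece of the filtration. Since the involutive structure on $X$ is by construction pulled back along $\eta$, so that $\eta^*\Lambda_Z^{0,1}\hookrightarrow\Lambda_X^{0,1}$ and more generally $\eta^*\Lambda_Z^{0,p}$ is the bottom filtrand of $\Lambda_X^{0,p}$, naturality of the exterior derivative gives $\bar\partial_X\eta^*\omega=\eta^*(\bar\partial_Z\omega)$, which again lands in the bottom filtrand one degree higher. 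Interpreting this equality in $E_1^{p+1,0}\cong\Gamma(Z,\Lambda_Z^{0,p+1})$ shows that $d_1=\bar\partial_Z$, whence $E_2^{p,0}=H^p(Z,{\mathcal{O}})$ and all other $E_2$-terms vanish.

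Finally, a spectral sequence concentrated in a single row degenerates at $E_2$, there being no room for higher differentials, so the abutment identifies $H^r(\Gamma(X,\Lambda_X^{0,\bullet}),\bar\partial_X)$ with $E_2^{r,0}=H^r(Z,{\mathcal{O}})$ for every~$r$, as required. The hard part will be the identification of $d_1$ with $\bar\partial_Z$ in the second step: one must track carefully how $\bar\partial_X$ interacts with pullback along $\eta$ and with a single step of the filtration, and check that this matches the edge homomorphism of the spectral sequence under the identification supplied by the preceding Proposition. Once that is settled, the remainder is purely formal.
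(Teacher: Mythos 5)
Your proposal is correct and follows essentially the same route as the paper, which simply notes that by the preceding Proposition the columns of~(\ref{topology}) resolve $\Gamma(Z,\Lambda_Z^{0,p})$, so the spectral sequence collapses to the single row $q=0$ carrying the Dolbeault complex of~$Z$. Your extra care in identifying $d_1$ with $\bar\partial_Z$ via naturality of pullback is exactly the detail the paper leaves implicit.
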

In fact, for the double fibration~(\ref{balanced}), the fibres of $\eta$ are
not contractible and in \S\ref{particular} we shall have to revisit the
spectral sequence (\ref{topology}) to relate the Dolbeault cohomology
$H^r(Z,{\mathcal{O}})$ with the {\em involutive cohomology\/}
$H^r(\Gamma(X,\Lambda_X^{0,\bullet}),\bar\partial_X)$.

Nevertheless, we may deal with the fibration $\tau:X\to M$ exactly as in our 
proof of Theorem~\ref{basic}. Specifically, we define a bundle 
$\Lambda_\mu^{1,0}$ on $X$ by the exact sequence
\begin{equation}\label{bizarro}
0\to\Lambda_\mu^{1,0}\to\Lambda_X^{0,1}\to\Lambda_\tau^{0,1}\to 0\end{equation}
and employ the spectral sequence of the corresponding filtered complex 
$\Lambda_X^{0,\bullet}$ to conclude that the following theorem holds.
\begin{theorem}\label{advanced}
Suppose that
\begin{equation}\tag{\ref{smoothdoublefibration}}
\raisebox{-20pt}{\begin{picture}(60,40)
\put(0,5){\makebox(0,0){$Z$}}
\put(30,35){\makebox(0,0){$X$}}
\put(60,5){\makebox(0,0){$M$}}
\put(25,30){\vector(-1,-1){18}}
\put(35,30){\vector(1,-1){18}}
\put(12,24){\makebox(0,0){$\eta$}}
\put(48,24){\makebox(0,0){$\tau$}}
\end{picture}}\end{equation}
is a double fibration of smooth manifolds such that 
\begin{itemize}
\item $Z$ is a complex manifold,
\item the fibres of $\tau$ are embedded by $\eta$ as compact complex 
submanifolds of~$Z$.
\end{itemize}
Then the bundle~$\Lambda_\mu^{1,0}$, defined as the middle cohomology of the
complex 
\begin{equation}\label{middlecohomology}
0\to\eta^*\Lambda_Z^{1,0}\to\Lambda_X^1\to\Lambda_\tau^{0,1}\to 0,
\end{equation}
acquires a natural holomorphic structure along the fibres of\/ $\tau$ 
and there is a spectral sequence
\begin{equation}\label{SS}
E_1^{p,q}=\Gamma(M,\tau_*^q\Lambda_\mu^{p,0})\Longrightarrow
H^{p+q}(\Gamma(X,\Lambda_X^{0,\bullet}),\bar\partial_X).\end{equation}
\end{theorem}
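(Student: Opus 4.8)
The plan is to imitate the proof of Theorem~\ref{basic}, with the complex manifold~$Z$ there replaced by the involutive manifold~$X$ and the Dolbeault complex $\Lambda_Z^{0,\bullet}$ replaced by the involutive complex $\Lambda_X^{0,\bullet}$. As a preliminary one reconciles the two descriptions of $\Lambda_\mu^{1,0}$. The composite $\Lambda_X^1\to\Lambda_\tau^1\to\Lambda_\tau^{0,1}$ annihilates $\eta^*\Lambda_Z^{1,0}$: by the second hypothesis $\eta$ restricts to each fibre $Y_m=\tau^{-1}(m)$ as a holomorphic embedding into~$Z$, so a $(1,0)$-form on $Z$ pulls back to a $(1,0)$-form on $Y_m$ and hence maps to zero in~$\Lambda_\tau^{0,1}$. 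Therefore this composite descends to a surjection $\Lambda_X^{0,1}\to\Lambda_\tau^{0,1}$ whose kernel is at once the bundle $\Lambda_\mu^{1,0}$ of~(\ref{bizarro}) and the middle cohomology of the three-term complex~(\ref{middlecohomology}).

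Next one records the analogue of the commutative diagram~(\ref{acquired}). Because the involutive structure on $X$ is by construction the $\eta$-pullback of the complex structure on~$Z$, and because $\eta$ identifies each fibre $Y_m$ of $\tau$ with a compact complex submanifold of~$Z$, the complex structure that $\Lambda_X^{0,\bullet}$ induces on $Y_m$ agrees with the intrinsic one, so $\bar\partial_X$ restricts along the fibres of $\tau$ to the genuine $\bar\partial_\tau$ of those fibres. Exactly as in~\S\ref{general}, the subbundle~(\ref{bizarro}) then induces a finite filtration of each $\Lambda_X^{0,q}$ with which $\bar\partial_X$ is compatible; chasing $\bar\partial_X$ through~(\ref{bizarro}) and this diagram shows that the composite $\Lambda_\mu^{1,0}\to\Lambda_X^{0,1}\xrightarrow{\,\bar\partial_X\,}\Lambda_X^{0,2}\to\Lambda_\tau^{0,2}$ vanishes, so that $\bar\partial_X$ induces an operator $\bar\partial_\tau:\Lambda_\mu^{1,0}\to\Lambda_\tau^{0,1}\otimes\Lambda_\mu^{1,0}$; the same bookkeeping on the higher subquotients $\Lambda_\mu^{p,0}$ of $\Lambda_X^{0,\bullet}$ produces the entire relative Dolbeault complex and, in particular, $\bar\partial_\tau^2=0$. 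This is the advertised holomorphic structure on $\Lambda_\mu^{1,0}$ along the fibres of~$\tau$.

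One then runs the spectral sequence of the filtered complex $\Gamma(X,\Lambda_X^{0,\bullet})$. Column by column its $E_0$-page is the complex $\Gamma(X,\Lambda_\tau^{0,\bullet}\otimes\Lambda_\mu^{p,0})$ with vertical differential the relative $\bar\partial_\tau$ coupled to $\Lambda_\mu^{p,0}$; since the fibres of $\tau$ are compact, taking vertical cohomology and pushing down to $M$ identifies this column with $\Gamma(M,\tau_*^q\Lambda_\mu^{p,0})$, where $\tau_*^q$ denotes the $q$-th direct image taken with respect to the fibrewise holomorphic structure. Hence $E_1^{p,q}=\Gamma(M,\tau_*^q\Lambda_\mu^{p,0})$, and since the filtration is finite and bounded the spectral sequence converges to $H^{p+q}(\Gamma(X,\Lambda_X^{0,\bullet}),\bar\partial_X)$, which is~(\ref{SS}).

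I expect the main obstacle to be the second of these steps: establishing that the ambient involutive structure on $X$, which records only the $(1,0)$-directions pulled back from~$Z$, restricts along the fibres of $\tau$ to their intrinsic complex structure, and in particular that $\Lambda_\tau^{0,1}$ is genuinely a quotient of $\Lambda_X^{0,1}$ and not merely of~$\Lambda_X^1$. Everything downstream is the formal homological algebra of a filtered complex, already rehearsed for Theorem~\ref{basic}, but this compatibility is exactly what the two hypotheses are arranged to supply and it deserves a careful check. As in~\S\ref{general}, one should finally note that the direct images $\tau_*^q\Lambda_\mu^{p,0}$ are honest smooth vector bundles in the situations of interest --- certainly whenever the double fibration is homogeneous --- so that~(\ref{SS}) is not a vacuous assertion.
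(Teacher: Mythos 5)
Your proposal is correct and follows essentially the same route as the paper, which simply defines $\Lambda_\mu^{1,0}$ by the short exact sequence~(\ref{bizarro}) and runs the spectral sequence of the resulting filtered complex $\Lambda_X^{0,\bullet}$ ``exactly as in the proof of Theorem~\ref{basic}''. Your reconciliation of the two descriptions of $\Lambda_\mu^{1,0}$ and your check that $\bar\partial_X$ restricts along the fibres of $\tau$ to the intrinsic $\bar\partial_\tau$ are precisely the details the paper leaves implicit.
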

\begin{corollary}\label{contractiblecorollary}
If, in addition, the fibres of $\eta$ are contractible, then 
$$E_1^{p,q}=\Gamma(M,\tau_*^q\Lambda_\mu^{p,0})\Longrightarrow
H^{p+q}(Z,{\mathcal{O}}).$$ 
\end{corollary}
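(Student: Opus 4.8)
The plan is simply to combine Theorem~\ref{advanced} with Proposition~\ref{pullback}. Theorem~\ref{advanced} already supplies, under the two bulleted hypotheses on the double fibration~(\ref{smoothdoublefibration}), the spectral sequence~(\ref{SS}) whose abutment is the involutive cohomology $H^{p+q}(\Gamma(X,\Lambda_X^{0,\bullet}),\bar\partial_X)$. The only remaining task is to recognise this abutment, under the extra hypothesis that the fibres of $\eta$ are contractible, as the Dolbeault cohomology $H^{p+q}(Z,{\mathcal{O}})$.

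First I would check that the hypotheses of Proposition~\ref{pullback} are met: it asks only that $\eta:X\to Z$ be a fibration with contractible fibres, which is precisely the added hypothesis, the fibration property being part of the standing assumptions on~(\ref{smoothdoublefibration}). Proposition~\ref{pullback} then gives $H^r(Z,{\mathcal{O}})\cong H^r(\Gamma(X,\Lambda_X^{0,\bullet}),\bar\partial_X)$ for every~$r$, and substituting this isomorphism into the abutment of~(\ref{SS}) yields the asserted spectral sequence.

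There is no real obstacle here, the substantive work having been done already in establishing Theorem~\ref{advanced} and Proposition~\ref{pullback}. If one wished to be scrupulous, the single point worth a remark is that the isomorphism of Proposition~\ref{pullback} is not merely abstract but arises from the collapse of the spectral sequence~(\ref{topology}) at its $E_1$-level; it is therefore induced by the pullback map $\Gamma(Z,\Lambda_Z^{0,\bullet})\to\Gamma(X,\Lambda_X^{0,\bullet})$ along~$\eta$, which intertwines $\bar\partial_Z$ with $\bar\partial_X$ since the involutive structure on $X$ was defined by pulling back the complex structure of~$Z$. Consequently the substitution into~(\ref{SS}) is compatible with the underlying filtrations and hence unambiguous, and the corollary follows.
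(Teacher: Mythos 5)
Your argument is correct and is exactly the paper's: the corollary is deduced by substituting the isomorphism of Proposition~\ref{pullback} into the abutment of the spectral sequence~(\ref{SS}) from Theorem~\ref{advanced}, which is what the paper's one-line proof (``Immediate from Proposition~\ref{pullback}'') records. Your additional remark on the compatibility of the identification with the filtration is a reasonable elaboration but not a departure from the paper's route.
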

\begin{proof}Immediate from Proposition~\ref{pullback}.
\end{proof}
\noindent In \S\ref{flag} we shall present an example for which the fibres of 
$\eta$ are, indeed, contractible and to which 
Corollary~\ref{contractiblecorollary} applies.

Before we continue, let us glance ahead to \S\ref{particular} in which the
first thing we do is use (\ref{topology}) to deal with the topology along the
fibres of $\eta$ for the double fibration~(\ref{balanced}). Another thing we
need in order to apply Theorem~\ref{advanced} is a computation of the direct
images $\tau_*^q\Lambda_\mu^{p,0}$ as homogeneous bundles on~${\mathbb{CP}}_n$.
This computation is best viewed in the light of a geometric interpretation of
$\Lambda_\mu^{1,0}$ as follows.
 
Suppose that $M$ is a totally real submanifold of a complex 
manifold~${\mathbb{M}}$ such that the double fibration 
(\ref{smoothdoublefibration}) embeds as
\begin{equation}\label{complexify}\raisebox{-20pt}{\begin{picture}(60,40)
\put(0,5){\makebox(0,0){$Z$}}
\put(30,35){\makebox(0,0){$X$}}
\put(60,5){\makebox(0,0){$M$}}
\put(25,30){\vector(-1,-1){18}}
\put(35,30){\vector(1,-1){18}}
\put(12,24){\makebox(0,0){$\eta$}}
\put(48,24){\makebox(0,0){$\tau$}}
\end{picture}}
\qquad\mbox{\Large$\hookrightarrow$}\qquad
\raisebox{-20pt}{\begin{picture}(60,40)
\put(0,5){\makebox(0,0){$Z$}}
\put(30,35){\makebox(0,0){${\mathbb{X}}$}}
\put(60,5){\makebox(0,0){${\mathbb{M}}\,,$}}
\put(25,30){\vector(-1,-1){18}}
\put(35,30){\vector(1,-1){18}}
\put(12,24){\makebox(0,0){$\mu$}}
\put(48,24){\makebox(0,0){$\nu$}}
\end{picture}}\end{equation}
where the ambient double fibration is in the holomorphic category and the 
fibres of $\nu$ coincide with the fibres of $\tau$ over~$M$.
\begin{proposition}\label{bizarre_notation_explained}
Under these circumstances the bundle $\Lambda_\mu^{1,0}$ of $(1,0)$-forms along
the fibres of $\mu$ coincides, when restricted to $X\subset{\mathbb{X}}$, with
the bundle already denoted in the same way and defined as the middle
cohomology of~\eqref{middlecohomology}.
\end{proposition}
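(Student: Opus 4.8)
The plan is to compare two short exact sequences of bundles on $X$ and observe that their middle cohomologies agree. On the abstract side, $\Lambda_\mu^{1,0}$ is by definition the middle cohomology of \eqref{middlecohomology}, equivalently $\ker(\Lambda_X^{0,1}\to\Lambda_\tau^{0,1})$ via \eqref{bizarro}. On the complex side, the embedding $X\subset{\mathbb X}$ gives a genuine $(1,0)$/$(0,1)$ splitting of $\Lambda_X^1\otimes{\mathbb C}$ pulled back from ${\mathbb X}$, and the fibres of $\mu$ being complex submanifolds means the $(1,0)$-forms along the fibres of $\mu$ sit in the usual exact sequence
\begin{equation*}
0\to\nu^*\Lambda_{\mathbb M}^{1,0}\to\Lambda_{\mathbb X}^{1,0}\to\Lambda_\mu^{1,0}\to 0,
\end{equation*}
which I would restrict to $X$. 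The task is to identify this restriction with $\ker(\Lambda_X^{0,1}\to\Lambda_\tau^{0,1})$.

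First I would unwind what $\Lambda_X^{0,1}$ is in the presence of the complexification. By \eqref{involutive}, $\Lambda_X^{0,1}=\Lambda_X^1\big/\eta^*\Lambda_Z^{1,0}$, where $\Lambda_X^1$ denotes complex-valued $1$-forms on the real manifold $X$; since $\eta$ factors through the holomorphic $\mu:{\mathbb X}\to Z$ and $X$ is totally real in ${\mathbb X}$, the pullback $\eta^*\Lambda_Z^{1,0}$ is exactly the image in $\Lambda_X^1\otimes{\mathbb C}$ of $\mu^*\Lambda_{\mathbb X}^{1,0}\big|_X$ under the restriction map. Because $X\subset{\mathbb X}$ is totally real of the complementary dimension (so that $T_X\otimes{\mathbb C}\to T_{\mathbb X}\big|_X$ is an isomorphism), restriction of forms gives an isomorphism $\Lambda_{\mathbb X}^1\big|_X\xrightarrow{\sim}\Lambda_X^1\otimes{\mathbb C}$, carrying the holomorphic subbundle $\mu^*\Lambda_{\mathbb X}^{1,0}\big|_X$ onto $\eta^*\Lambda_Z^{1,0}\subset\Lambda_X^1\otimes{\mathbb C}$. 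Hence $\Lambda_X^{0,1}\cong\Lambda_{\mathbb X}^{0,1}\big|_X$ as quotients, and this is the key bookkeeping step.

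Next I would run the same identification one step further down the filtration. The fibres of $\tau$ on $X$ coincide, by hypothesis, with the fibres of $\nu$ over $M$, so $\Lambda_\tau^{0,1}=\Lambda_\nu^{0,1}\big|_X$ (the $(0,1)$-forms along the fibres of $\nu$, restricted to $X$), and under the isomorphism of the previous paragraph the surjection $\Lambda_X^{0,1}\to\Lambda_\tau^{0,1}$ is the restriction to $X$ of the surjection $\Lambda_{\mathbb X}^{0,1}\to\Lambda_\nu^{0,1}$ coming from $\mu^*\Lambda_Z^{0,1}\hookrightarrow\Lambda_{\mathbb X}^{0,1}$ (because $\mu$ and $\eta$ agree and the fibres match). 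Taking kernels and using right-exactness of restriction to the totally real submanifold, we get
\begin{equation*}
\ker\big(\Lambda_X^{0,1}\to\Lambda_\tau^{0,1}\big)\;\cong\;\ker\big(\Lambda_{\mathbb X}^{0,1}\to\Lambda_\nu^{0,1}\big)\Big|_X\;=\;\Lambda_\mu^{1,0}\Big|_X,
\end{equation*}
where the last equality is the standard description of $(1,0)$-forms along the fibres of the holomorphic submersion $\mu$. Combined with \eqref{bizarro} this is exactly the claimed coincidence.

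The main obstacle is the totally real bookkeeping in the second paragraph: one must check carefully that restriction of complex-valued forms from ${\mathbb X}$ to the totally real $X$ is an isomorphism on $\Lambda^1$ and, crucially, that it identifies the \emph{holomorphic} subbundle $\mu^*\Lambda_{\mathbb X}^{1,0}$ with the subbundle $\eta^*\Lambda_Z^{1,0}$ used to define the involutive structure in \eqref{involutive} — i.e. that the involutive structure on $X$ is precisely the one induced from the complex structure on ${\mathbb X}$ by transversality of $X$ to the fibres of $\mu$. Once that compatibility is in hand, everything else is a diagram chase with the two three-term sequences, and the identification of the bundles (hence of their holomorphic structures along the fibres of $\tau=\nu|_M$) is automatic.
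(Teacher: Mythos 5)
Your argument rests on the claim that $X\subset{\mathbb X}$ is totally real of complementary dimension, so that restriction of forms identifies $\Lambda_{\mathbb X}^{0,1}\big|_X$ with $\Lambda_X^{0,1}$. This is false: it is $M\subset{\mathbb M}$ that is totally real, not $X\subset{\mathbb X}$. Indeed $X$ contains the fibres of $\tau$, which by hypothesis are the fibres of $\nu$ over $M$ and hence are complex submanifolds of ${\mathbb X}$, so $X$ cannot be totally real; the dimensions do not match either: with $m=\dim_{\mathbb R}M$, $s=\dim_{\mathbb C}(\mbox{fibres of }\tau)$, $n=\dim_{\mathbb C}Z$, one has $\dim_{\mathbb R}X=m+2s$ whereas $\dim_{\mathbb C}{\mathbb X}=m+s$. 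Consequently $\Lambda_X^{0,1}$ (rank $m+2s-n$) and $\Lambda_{\mathbb X}^{0,1}\big|_X$ (rank $m+s$) are not isomorphic. The final step fails for the same kind of reason: $\ker\big(\Lambda_{\mathbb X}^{0,1}\to\Lambda_\nu^{0,1}\big)$ is $\nu^*\Lambda_{\mathbb M}^{0,1}$, of rank $m$, whereas the bundle $\Lambda_\mu^{1,0}$ of $(1,0)$-forms along the fibres of $\mu$ is by definition the quotient $\Lambda_{\mathbb X}^{1,0}/\mu^*\Lambda_Z^{1,0}$, of rank $m+s-n$; in the case $n=3$ of \S\ref{particular} these ranks are $6$ and $4$. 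The whole point of the ``bizarre'' notation is that a quotient of $(1,0)$-type forms reappears as a subbundle of the $(0,1)$-forms of the involutive structure, and that cannot be seen by matching kernels of the two $(0,1)$-restriction maps.

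The identification genuinely requires the conormal bundle. Since $X$ has the same real codimension in $Z\times M$ as ${\mathbb X}$ has in $Z\times{\mathbb M}$, the complexified conormal bundle ${\mathcal C}$ of $X$ is the restriction of the ambient one and therefore splits as ${\mathcal C}^{0,1}\oplus{\mathcal C}^{1,0}$. Chasing the basic $3\times3$ diagram of the double fibration modulo $\eta^*\Lambda_Z^{1,0}$ and ${\mathcal C}^{1,0}$ gives $\Lambda_\mu^{1,0}=\tau^*\Lambda_M^1/{\mathcal C}^{1,0}$ for the bundle defined by \eqref{middlecohomology}; only then does total reality enter, and of $M\subset{\mathbb M}$ rather than of $X$: it identifies $\tau^*\Lambda_M^1$ with $\nu^*\Lambda_{\mathbb M}^{1,0}$ along $X$, whereupon the ambient diagram \eqref{final_basic_diagram} exhibits the same quotient $\nu^*\Lambda_{\mathbb M}^{1,0}/{\mathcal C}^{1,0}$ as the $(1,0)$-forms along the fibres of $\mu$. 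Your instinct to compare the two three-term sequences is right, but the comparison must pass through ${\mathcal C}^{1,0}\subset\tau^*\Lambda_M^1$, not through a (nonexistent) isomorphism $\Lambda_X^{0,1}\cong\Lambda_{\mathbb X}^{0,1}\big|_X$.
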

\begin{proof} If we write 
$$n=\dim_{\mathbb{C}}Z\qquad m=\dim_{\mathbb{R}}M\qquad
s=\dim_{\mathbb{C}}(\mbox{fibres of }\tau),$$
then $\dim_{\mathbb{R}}X=m+2s$ and $X$ has real codimension $2(n-s)$
in~$Z\times M$. This is the same as the real codimension of ${\mathbb{X}}$ in
$Z\times{\mathbb{M}}$ and it follows that the complexified conormal bundle
${\mathcal{C}}$ of $X$ in $Z\times M$ coincides with the restriction to $X$ of
the complexified conormal bundle of ${\mathbb{X}}$ in~$Z\times{\mathbb{M}}$.
Hence it splits as ${\mathcal{C}}={\mathcal{C}}^{0,1}\oplus{\mathcal{C}}^{1,0}$
in line with the complex structure on the ambient double fibration. For any 
double fibration there is a basic commutative diagram with exact rows and
columns, which in the case of (\ref{smoothdoublefibration}) looks as follows.
$$\begin{array}{ccccccccc}
&&0&&0\\
&&\uparrow&&\uparrow\\
&&\Lambda_\tau^1&=&\Lambda_\tau^1\\
&&\uparrow&&\uparrow\\
0&\to&\eta^*\Lambda_Z^1&\to&\Lambda_X^1&\to&\Lambda_\eta^1&\to&0\\
&&\uparrow&&\uparrow&&\|\\
0&\to&{\mathcal{C}}&\to&\tau^*\Lambda_M^1&\to&\Lambda_\eta^1&\to&0\\
&&\uparrow&&\uparrow\\
&&0&&0
\end{array}$$
But we have just observed that the left hand column has the additional feature 
that it splits 
$$\begin{array}{ccc}
\Lambda_\tau^1&=&\Lambda_\tau^{0,1}\oplus\Lambda_\tau^{1,0}\\
\uparrow&&\uparrow\hspace{30pt}\uparrow\\
\eta^*\Lambda_Z^1&&\eta^*\Lambda_Z^{0,1}\oplus\eta^*\Lambda_Z^{1,0}\\
\uparrow&&\uparrow\hspace{30pt}\uparrow\\
{\mathcal{C}}&=&\,{\mathcal{C}}^{0,1}\oplus\,{\mathcal{C}}^{1,0}
\end{array}$$
in line with the ambient complex structure. Hence we obtain the diagram
$$\begin{array}{ccccccccc}
&&0&&0\\
&&\uparrow&&\uparrow\\
&&\Lambda_\tau^{0,1}&=&\Lambda_\tau^{0,1}\\
&&\uparrow&&\uparrow\\
0&\to&\eta^*\Lambda_Z^{0,1}&\to&
\Lambda_X^1/\eta^*\Lambda_Z^{1,0}&\to&\Lambda_\eta^1&\to&0\\
&&\uparrow&&\uparrow&&\|\\
0&\to&{\mathcal{C}}^{0,1}&\to&
\tau^*\Lambda_M^1/{\mathcal{C}}^{1,0}&\to&\Lambda_\eta^1&\to&0\\
&&\uparrow&&\uparrow\\
&&0&&0
\end{array}$$
and it follows from (\ref{involutive}) and (\ref{bizarro}) that
$\Lambda_\mu^{1,0}=\tau^*\Lambda_M^1/{\mathcal{C}}^{1,0}$. On the other hand,
since $M\hookrightarrow{\mathbb{M}}$ is totally real, we may identify
$\Lambda_M^1$ with $\Lambda_{\mathbb{M}}^{1,0}$ along $M$ and therefore
$\tau^*\Lambda_M^1$ with $\nu^*\Lambda_{\mathbb{M}}^{1,0}$ along
$X\subset{\mathbb{X}}$, at which point the basic diagram on~${\mathbb{X}}$
\begin{equation}\label{final_basic_diagram}\begin{array}{ccccccccc} &&0&&0\\
&&\uparrow&&\uparrow\\
&&\Lambda_\nu^{1,0}&=&\Lambda_\nu^{1,0}\\
&&\uparrow&&\uparrow\\
0&\to&\mu^*\Lambda_Z^{1,0}&\to&\Lambda_{\mathbb{X}}^{1,0}&\to&
\Lambda_\mu^{1,0}&\to&0\\
&&\uparrow&&\uparrow&&\|\\
0&\to&{\mathcal{C}}^{1,0}&\to&\nu^*\Lambda_{\mathbb{M}}^{1,0}&\to&
\Lambda_\mu^{1,0}&\to&0\\
&&\uparrow&&\uparrow\\
&&0&&0
\end{array}\end{equation}
for the ambient double fibration in the holomorphic setting finishes the proof.
\end{proof}
Finally, it is left to the reader also to check that the holomorphic structure
for the bundle $\Lambda_\mu^{1,0}$ on $X$ along the fibres of~$\tau$ coincides
with the standard holomorphic structure along the fibres of $\mu$ for the
bundle $\Lambda_\mu^{1,0}$ on ${\mathbb{X}}$ when restricted to
$X\hookrightarrow{\mathbb{X}}$.

In summary, for a double fibration of the form~(\ref{smoothdoublefibration}),
firstly we have a spectral sequence (\ref{topology}) that can be used to
interpret Dolbeault cohomology on $Z$ in terms of involutive cohomology on $X$,
secondly another spectral sequence (\ref{SS}) that can be used to interpret the
involutive cohomology on $X$ in terms of smooth data on $M$ and, thirdly, in
case that (\ref{smoothdoublefibration}) complexifies as~(\ref{complexify}), a
geometric interpretation of the bundles $\Lambda_\mu^{p,0}$ occurring in this
spectral sequence. In the following section, we shall see that this is just
what we need to operate the transform onto complex projective space starting
with the double fibration~(\ref{balanced}).

\section{A particular transform}\label{particular}
This section is entirely concerned with the double fibration~(\ref{balanced}),
which will be dealt with mainly by means of Theorem~\ref{advanced}. But, as
foretold in~\S\ref{general}, the first thing we should do is deal with the
topology of the fibres of~$\eta$. 
\begin{proposition}\label{thefibres}
For the double fibration \eqref{balanced}
\begin{itemize}
\item the fibres of\/ $\eta$ are isomorphic to\/~${\mathbb{CP}}_{n-2}$ as 
smooth manifolds,
\item the fibres of\/ $\tau$ are isomorphic to 
${\mathbb{F}}_{1,n-1}({\mathbb{C}}^n)$ as complex manifolds.
\end{itemize}
\end{proposition}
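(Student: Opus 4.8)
The plan is to unwind the definitions of the forgetful maps $\eta$ and $\tau$ in the incidence variety $X=\{(L,H,\ell)\mid \ell\subseteq L^\perp\cap H\}$ and to compute each fibre as a variety parametrising the remaining data once the appropriate piece is fixed.

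First I would handle $\eta$. Fixing a flag $(L,H)$ with $L\subset H\subset{\mathbb{C}}^{n+1}$, $\dim L=1$, $\dim H=n$, the fibre $\eta^{-1}(L,H)$ consists of all lines $\ell\in{\mathbb{CP}}_n$ with $\ell\subseteq L^\perp\cap H$. Here the key point is a dimension count: $L^\perp$ has codimension $1$, $H$ has codimension $1$, and since $L\subset H$ the line $L$ is \emph{not} contained in $L^\perp$ (as $\langle v,v\rangle\neq 0$ for $0\neq v\in L$), so $L^\perp$ and $H$ are in general position and $L^\perp\cap H$ has dimension $(n+1)-2=n-1$. Thus $\ell$ ranges over all lines in an $(n-1)$-dimensional subspace, i.e.\ over ${\mathbb{P}}(L^\perp\cap H)\cong{\mathbb{CP}}_{n-2}$. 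One should note that this identification uses the Hermitian form, hence is only smooth (real-analytic), not holomorphic, in the flag $(L,H)$ — which is exactly why the first bullet claims only a smooth isomorphism. I would also remark that $X\to Z$ is then the smooth ${\mathbb{CP}}_{n-2}$-bundle ${\mathbb{P}}(L^\perp\cap H)$, but for the statement as written the fibrewise identification suffices.

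Next I would handle $\tau$. Fixing $\ell\in{\mathbb{CP}}_n$, a one-dimensional subspace of ${\mathbb{C}}^{n+1}$, the fibre $\tau^{-1}(\ell)$ consists of all flags $(L,H)$ with $L\subset H$, $\dim L=1$, $\dim H=n$, subject to $\ell\subseteq L^\perp\cap H$. The two conditions are $\ell\subseteq H$ and $\ell\subseteq L^\perp$, the latter being equivalent to $L\subseteq\ell^\perp$. So the fibre is $\{(L,H)\mid L\subset H,\ \dim L=1,\ \dim H=n,\ L\subset\ell^\perp,\ \ell\subset H\}$. Now I would pass to the $n$-dimensional space $W:=\ell^\perp$: the condition $\ell\subset H$ together with $H\supset L\supset$ nothing forces $H=\ell\oplus(H\cap W)$ after choosing a complement, so writing $\bar H:=H\cap W$ one has $\dim\bar H=n-1$ and $\bar H\subset W$; conversely $H$ is recovered as $\ell+\bar H$. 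Meanwhile $L\subset\ell^\perp=W$ and $L\subset H$ forces $L\subseteq\bar H$ (since $L\cap\ell=0$ as $L\subset W$, $\ell\not\subset W$). Hence the fibre is identified with $\{(L,\bar H)\mid L\subset\bar H\subset W,\ \dim L=1,\ \dim\bar H=n-1\}={\mathbb{F}}_{1,n-1}({\mathbb{C}}^n)$. This identification is holomorphic because it only uses the linear-algebra operation $H\mapsto H\cap\ell^\perp$ with $\ell$ fixed — matching the second bullet.

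**Main obstacle.** The only real subtlety, and the point I would be most careful about, is verifying the transversality/general-position facts that make the dimension counts exact: namely that $L^\perp\cap H$ has dimension exactly $n-1$ (equivalently $L^\perp+H={\mathbb{C}}^{n+1}$, which holds because $L\subset H$ but $L\not\subset L^\perp$), and that $\ell\cap\ell^\perp=0$ so that the decomposition $H=\ell\oplus(H\cap\ell^\perp)$ and the inclusion $L\subseteq H\cap\ell^\perp$ are forced. These all rest on positive-definiteness of the Hermitian form, and it is worth stating explicitly that this is where the metric enters. Everything else is a routine check that the bijections just described are biholomorphic (resp.\ diffeomorphic) by exhibiting them as restrictions of the obvious algebraic maps on the relevant Grassmannians and flag manifolds.
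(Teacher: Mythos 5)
Your proof is correct and follows essentially the same route as the paper: the $\eta$-fibre is $\mathbb{P}(L^\perp\cap H)\cong{\mathbb{CP}}_{n-2}$ (the intersection of two distinct hyperplanes), and the $\tau$-fibre is identified with flags $L\subset\bar H$ in the $n$-dimensional space $\ell^\perp$, with $H$ recovered as the join $\ell\oplus\bar H$. The only difference is that you spell out the transversality facts ($L\not\subset L^\perp$, $\ell\cap\ell^\perp=0$) that the paper leaves implicit in its picture of the incidence configuration; this is a welcome bit of extra care, not a divergence.
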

\begin{proof} It is useful to draw a picture in ${\mathbb{CP}}_n$ of the
incidence variety~(\ref{incidence}) (although, of course, this is a picture over
the reals in case $n=3$). 
$$\begin{picture}(200,212)
\put(100,105){\makebox(0,0){\rotatebox{-90}
{\includegraphics[width=220pt]{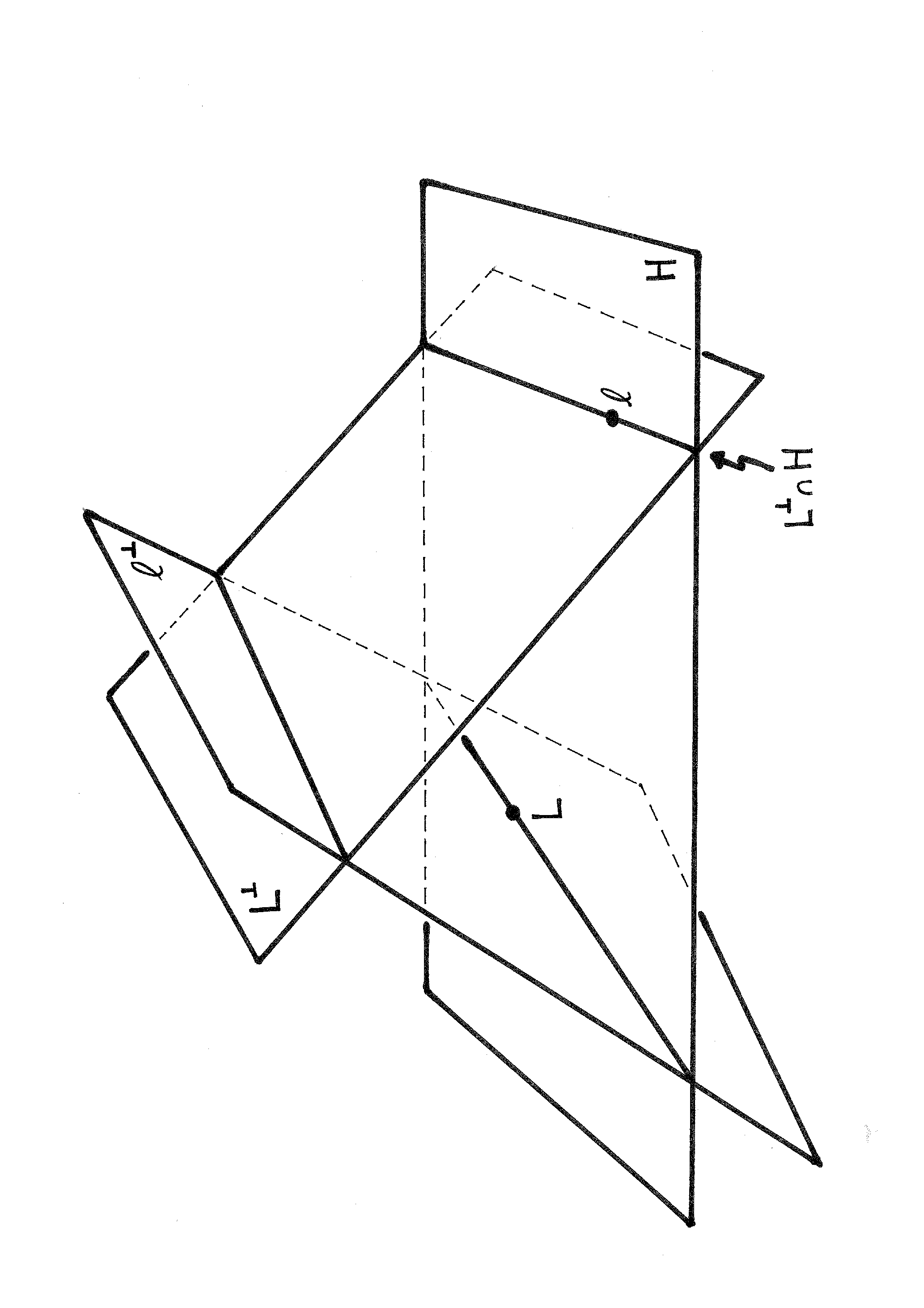}}}}
\end{picture}$$
There are two points $L$ and $\ell$ and three hyperplanes $L^\perp$,
$\ell^\perp$, and~$H$. Since $L^\perp\cap H$ is the intersection of two
hyperplanes in ${\mathbb{CP}}_n$ it is intrinsically~${\mathbb{CP}}_{n-2}$ and,
since the mapping $\eta$ from this configuration to
${\mathbb{F}}_{1,n}({\mathbb{C}}^{n+1})$ forgets everything but~$L\in H$, 
we have identified its fibres with~${\mathbb{CP}}_{n-2}$. On the other hand, 
if $\ell$ is fixed, then the rest of the configuration may be constructed by 
choosing an arbitrary point $L\in\ell^\perp$ and an arbitrary hyperplane in 
$\ell^\perp$ passing through $L$, defining H as the join of this hyperplane 
with~$\ell$.
\end{proof}
Examining this configuration also shows how the double fibration
(\ref{balanced}) may be naturally complexified to obtain~(\ref{complexify}).
One simply allows the point $\ell\in{\mathbb{CP}}_n$ and the hyperplane
$\ell^\perp\in{\mathbb{CP}}_n^*$ to become unrelated save for retaining that
$\ell\not\in\ell^\perp$. More precisely, let
$${\mathbb{M}}\equiv
\{(\ell,h)\in{\mathbb{CP}}_n\times{\mathbb{CP}}_n^*\mid\ell\not\in h\}=
{\mathbb{F}}_{1}({\mathbb{C}}^{n+1})\!\times\!{\mathbb{F}}_{n}({\mathbb{C}}^{n+1})
\setminus{\mathbb{F}}_{1,n}({\mathbb{C}}^{n+1})$$
with ${\mathbb{CP}}_n\equiv M\hookrightarrow{\mathbb{M}}$ given by 
$\ell\mapsto(\ell,\ell^\perp)$, where the orthogonal complement is taken with 
respect to a fixed Hermitian inner product on~${\mathbb{C}}^{n+1}$. If we set
$${\mathbb{X}}\equiv\{(L,H,\ell,h)\mid L\subset h\mbox{ and }\ell\subset H\}$$
then clearly this extends $X$ in~(\ref{incidence}): the geometry is exactly the
same except that $\ell^\perp$ is replaced by the less constrained
hyperplane~$h$. An advantage of the complexified double fibration is 
that it is homogeneous under the action of ${\mathrm{GL}}(n+1,{\mathbb{C}})$:
\small
\begin{equation}\label{homogeneous}
\raisebox{-100pt}{\begin{picture}(300,200)(10,0)
\put(60,40){\makebox(0,0){$\setlength{\arraycolsep}{3pt}
\renewcommand{\arraystretch}{.8}
{\mathrm{GL}}(n+1,{\mathbb{C}})
\Bigg/
\left\{\left[\begin{array}{c|c|ccc|c}
\rule{0pt}{8pt}*&0&*&\cdots&*&*\\ \hline
\rule{0pt}{8pt}*&*&*&\cdots&*&*\\ \hline
\rule{0pt}{8pt}*&0&*&\cdots&*&*\\[-2pt]
\vdots&\vdots&\vdots &&\vdots&\vdots\\
\rule{0pt}{8pt}*&0&*&\cdots&*&*\\ \hline
\rule{0pt}{8pt}0&0&0&\cdots&0&*
\end{array}\right]\right\}$}}
\put(150,160){\makebox(0,0){$\setlength{\arraycolsep}{3pt}
\renewcommand{\arraystretch}{.8}
{\mathrm{GL}}(n+1,{\mathbb{C}})
\Bigg/
\left\{\left[\begin{array}{c|c|ccc|c}
\rule{0pt}{8pt}*&0&0&\cdots&0&0\\ \hline
\rule{0pt}{8pt}0&*&*&\cdots&*&*\\ \hline
\rule{0pt}{8pt}0&0&*&\cdots&*&*\\[-2pt]
\vdots&\vdots&\vdots &&\vdots&\vdots\\
\rule{0pt}{8pt}0&0&*&\cdots&*&*\\ \hline
\rule{0pt}{8pt}0&0&0&\cdots&0&*
\end{array}\right]\right\}$}}
\put(240,40){\makebox(0,0){$\setlength{\arraycolsep}{3pt}
\renewcommand{\arraystretch}{.8}
{\mathrm{GL}}(n+1,{\mathbb{C}})
\Bigg/
\left\{\left[\begin{array}{c|c|ccc|c}
\rule{0pt}{8pt}*&0&0&\cdots&0&0\\ \hline
\rule{0pt}{8pt}0&*&*&\cdots&*&*\\ \hline
\rule{0pt}{8pt}0&*&*&\cdots&*&*\\[-2pt]
\vdots&\vdots&\vdots &&\vdots&\vdots\\
\rule{0pt}{8pt}0&*&*&\cdots&*&*\\ \hline
\rule{0pt}{8pt}0&*&*&\cdots&*&*
\end{array}\right]\right\}$}}
\put(130,120){\vector(-1,-1){40}}
\put(170,120){\vector(1,-1){40}}
\put(108,105){\makebox(0,0){\large$\mu$}}
\put(192,105){\makebox(0,0){\large$\nu$}}
\end{picture}}\end{equation}
\normalsize
Before exploiting this homogeneity, however, there is an immediate
consequence of Proposition~\ref{thefibres}, as follows.
\begin{proposition}\label{computation_involutive_cohomology}
Concerning the double fibration~\eqref{balanced}, there are canonical
isomorphisms
$$H^r(\Gamma(X,\Lambda_X^{0,\bullet}),\bar\partial_X)={\mathbb{C}}\quad
\mbox{for}\enskip r=0,2,4,6,\cdots, 2n-4,$$
and the cohomology in other degrees vanishes.
\end{proposition}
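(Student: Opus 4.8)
The plan is to compute the involutive cohomology $H^r(\Gamma(X,\Lambda_X^{0,\bullet}),\bar\partial_X)$ directly, by running the spectral sequence~(\ref{topology}) to completion with the help of Proposition~\ref{thefibres}. Column by column, the $E_0$-page of~(\ref{topology}) is the relative de~Rham complex $\big(\Gamma(X,\Lambda_\eta^\bullet\otimes\eta^*\Lambda_Z^{0,p}),d_\eta\big)$ of the fibration~$\eta$ coupled with the pulled-back bundle~$\eta^*\Lambda_Z^{0,p}$. Since this coefficient bundle is constant along the fibres of $\eta$ and $\eta$ is a proper submersion, hence a locally trivial fibre bundle with compact fibre, the relative de~Rham theorem identifies the $E_1$-page as $E_1^{p,q}=\Gamma(Z,\Lambda_Z^{0,p}\otimes{\mathcal{H}}^q)$, where ${\mathcal{H}}^q$ is the local system on $Z$ with stalk the $q^{\mathrm{th}}$ de~Rham cohomology of the fibre, the induced differential $d_1$ being $\bar\partial_Z$ coupled with~${\mathcal{H}}^q$.

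Now I would bring in Proposition~\ref{thefibres}: the fibre of $\eta$ is ${\mathbb{CP}}_{n-2}$, so the stalk of ${\mathcal{H}}^q$ is $H^q({\mathbb{CP}}_{n-2},{\mathbb{C}})$, one-dimensional for $q=0,2,4,\ldots,2n-4$ and zero otherwise. Because $Z={\mathbb{F}}_{1,n}({\mathbb{C}}^{n+1})$ is simply connected, ${\mathcal{H}}^q$ must be the constant sheaf~${\mathbb{C}}$ in those degrees. Hence $E_1^{p,q}=\Gamma(Z,\Lambda_Z^{0,p})$ for $q\in\{0,2,\ldots,2n-4\}$ and vanishes otherwise, the differential $d_1$ is just $\bar\partial_Z$, and therefore $E_2^{p,q}=H^p(Z,{\mathcal{O}})$ for $q\in\{0,2,\ldots,2n-4\}$ and $E_2^{p,q}=0$ otherwise.

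The final ingredient is the Bott--Borel--Weil theorem~\cite{bott}, which yields $H^p(Z,{\mathcal{O}})={\mathbb{C}}$ for $p=0$ and $H^p(Z,{\mathcal{O}})=0$ for $p>0$ (the weight $0$ becomes regular dominant after the $\rho$-shift, contributing only the trivial representation). So $E_2$ is supported in the single column $p=0$, every higher differential has zero target, and the spectral sequence degenerates at~$E_2$. Reading the total degree $r=p+q$ then gives $H^r(\Gamma(X,\Lambda_X^{0,\bullet}),\bar\partial_X)\cong E_2^{0,r}$, which is ${\mathbb{C}}$ for $r=0,2,4,\ldots,2n-4$ and $0$ for all other~$r$; the isomorphisms are canonical since each identification used is natural.

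The step I expect to require the most care is the identification of the $E_1$-page, namely that passing to fibrewise de~Rham cohomology commutes with taking global sections over~$Z$. This is the relative de~Rham theorem for the fibre bundle $\eta$ with compact fibre: the complex of sheaves $(\eta_*\Lambda_\eta^\bullet)\otimes\Lambda_Z^{0,p}$ on $Z$ consists of fine (hence $\Gamma$-acyclic) sheaves and has cohomology sheaves ${\mathcal{H}}^q\otimes\Lambda_Z^{0,p}$, themselves fine, so both hypercohomology spectral sequences of this complex degenerate and together give $H^q\big(\Gamma(X,\Lambda_\eta^\bullet\otimes\eta^*\Lambda_Z^{0,p}),d_\eta\big)\cong\Gamma(Z,{\mathcal{H}}^q\otimes\Lambda_Z^{0,p})$. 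Everything else is formal homological algebra once Proposition~\ref{thefibres} is available.
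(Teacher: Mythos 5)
Your argument is correct and follows essentially the same route as the paper: run the spectral sequence~(\ref{topology}), use Proposition~\ref{thefibres} together with the de~Rham cohomology of ${\mathbb{CP}}_{n-2}$ to identify the $E_1$-page as copies of the Dolbeault complex of $Z$ sitting in the even rows $q=0,2,\ldots,2n-4$, and then apply Bott--Borel--Weil to see that each such row contributes a single ${\mathbb{C}}$ at $p=0$, after which degeneration is automatic. The only point where the paper is more careful than you are is canonicity: rather than appealing to simple connectivity of $Z$ (which trivialises the local system ${\mathcal{H}}^q$ only up to a choice of generator), it observes that the fibres of $\eta$ carry a canonical K\"ahler metric induced by the fixed Hermitian inner product, so that powers of the K\"ahler form give an explicit canonical basis for the fibre cohomology and hence the canonical isomorphisms asserted in the statement.
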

\begin{proof} {From} Proposition~\ref{thefibres} and the well-known de~Rham
cohomology of complex projective space~\cite{BT}, it follows from 
(\ref{topology}) that the $E_1$-level of this spectral sequence is isomorphic 
to
\begin{equation}\label{theE1level}
\raisebox{-19pt}{\begin{picture}(280,95)(0,0)
\put(0,1){\makebox(0,0){$\Gamma(Z,\Lambda^{0,0})$}}
\put(0,21){\makebox(0,0){$0$}}
\put(0,41){\makebox(0,0){$\Gamma(Z,\Lambda^{0,0})$}}
\put(0,61){\makebox(0,0){$0$}}
\put(0,81){\makebox(0,0){$\Gamma(Z,\Lambda^{0,0})$}}
\put(80,1){\makebox(0,0){$\Gamma(Z,\Lambda^{1,0})$}}
\put(80,21){\makebox(0,0){$0$}}
\put(80,41){\makebox(0,0){$\Gamma(Z,\Lambda^{1,0})$}}
\put(80,61){\makebox(0,0){$0$}}
\put(80,81){\makebox(0,0){$\Gamma(Z,\Lambda^{1,0})$}}
\put(160,1){\makebox(0,0){$\Gamma(Z,\Lambda^{2,0})$}}
\put(160,21){\makebox(0,0){$0$}}
\put(160,41){\makebox(0,0){$\Gamma(Z,\Lambda^{2,0})$}}
\put(160,61){\makebox(0,0){$0$}}
\put(160,81){\makebox(0,0){$\cdots$}}
\put(240,1){\makebox(0,0){$\Gamma(Z,\Lambda^{3,0})$}} 
\put(240,21){\makebox(0,0){$0$}} 
\put(240,41){\makebox(0,0){$\cdots$}} 
\put(35,1){\vector(1,0){14}} 
\put(115,1){\vector(1,0){14}}
\put(195,1){\vector(1,0){14}} 
\put(35,21){\vector(1,0){14}} 
\put(115,21){\vector(1,0){14}}
\put(195,21){\vector(1,0){14}} 
\put(35,41){\vector(1,0){14}} 
\put(115,41){\vector(1,0){14}}
\put(195,41){\vector(1,0){14}} 
\put(35,61){\vector(1,0){14}} 
\put(115,61){\vector(1,0){14}}
\put(35,81){\vector(1,0){14}} 
\put(115,81){\vector(1,0){14}}
\put(275,1){\vector(1,0){25}}
\put(0,7){\line(0,1){7}}
\put(0,27){\line(0,1){7}} 
\put(0,47){\line(0,1){7}}
\put(0,67){\line(0,1){7}}
\put(0,87){\vector(0,1){12}} 
\put(295,6){\makebox(0,0){\scriptsize$p$}}
\put(5,95){\makebox(0,0){\scriptsize$q$}}
\end{picture}}\end{equation}
But, the fibres of $\eta$ are not only isomorphic to ${\mathbb{CP}}_{n-2}$ as
smooth manifolds but as K\"ahler manifolds---the fixed Hermitian inner product
on ${\mathbb{C}}^{n+1}$ endows each fibre with a canonical K\"ahler metric. In
particular, the K\"ahler form and its exterior powers provide an explicit
basis for the de~Rham cohomology and therefore this identification of the
$E_1$-level becomes canonical. Now, as a very special case of the
Bott-Borel-Weil Theorem~\cite{bott}, the cohomology of each row of 
(\ref{theE1level}) is concentrated in zeroth position where it is canonically 
identified with~${\mathbb{C}}$. As this spectral sequence converges to 
$H^{p+q}(\Gamma(X,\Lambda_X^{0,\bullet}),\bar\partial_X)$, the proof is 
complete.\end{proof}

Now we come to the task of interpreting the spectral sequence~(\ref{SS}). As
already mentioned in \S\ref{general} we shall use
Proposition~\ref{bizarre_notation_explained} and the complexified double 
fibration
$$\raisebox{-20pt}{\begin{picture}(55,40)
\put(0,5){\makebox(0,0){${\mathbb{F}}_{1,n}({\mathbb{C}}^{n+1})$}}
\put(30,35){\makebox(0,0){$X$}}
\put(60,5){\makebox(0,0){${\mathbb{CP}}_n$}}
\put(25,30){\vector(-1,-1){18}}
\put(35,30){\vector(1,-1){18}}
\put(12,24){\makebox(0,0){$\eta$}}
\put(48,24){\makebox(0,0){$\tau$}}
\end{picture}}
\qquad\mbox{\Large$\hookrightarrow$}\hspace{30pt}
\raisebox{-20pt}{\begin{picture}(180,40)
\put(0,5){\makebox(0,0){${\mathbb{F}}_{1,n}({\mathbb{C}}^{n+1})$}}
\put(30,35){\makebox(0,0){${\mathbb{X}}$}}
\put(55,5){\makebox(0,0)[l]{${\mathbb{M}}=
\{(\ell,h)\in{\mathbb{CP}}_n\times{\mathbb{CP}}_n^*\mid\ell\not\in h\}$}}
\put(25,30){\vector(-1,-1){18}}
\put(35,30){\vector(1,-1){18}}
\put(12,24){\makebox(0,0){$\mu$}}
\put(48,24){\makebox(0,0){$\nu$}}
\end{picture}}$$
to identify the direct images $\tau_*^q\Lambda_\mu^{0,p}$. This, in turn, will
be facilitated by the fact that the complexification is 
${\mathrm{GL}}(n+1,{\mathbb{C}})$-homogeneous as in~(\ref{homogeneous}).

For simplicity, we shall now restrict to the case $n=3$, the general case 
being only notationally more awkward. Adapting the notation of~\cite{mge}, 
the irreducible homogeneous vector bundles on ${\mathbb{M}}$ may be denoted
$$(a\,\|\,b,c,d)\quad\mbox{for integers}\enskip a,b,c,d\enskip\mbox{with}
\enskip b\leq c\leq d.$$
For example, the holomorphic cotangent bundle is
\begin{equation}\label{Lambda_M^1}
(-1\,\|\,0,0,1)\oplus(1\,\|\,{-1},0,0),\end{equation}
being the analytic continuation of the bundle 
$\Lambda_M^1=\Lambda_M^{0,1}\oplus\Lambda_M^{1,0}$ on~$M$. 
Similarly, the irreducible homogeneous vector bundles on ${\mathbb{X}}$ are
necessarily line bundles and may be denoted
$$(a\,\|\,b\,|\,c\,|\,d)\quad\mbox{for arbitrary integers}\enskip a,b,c,d.$$
By carefully unravelling the meaning of these symbols in terms of weights, one 
can check that the bundle $\Lambda_\mu^{1,0}$ is reducible and
\begin{equation}\label{key_bundle}\Lambda_\mu^{1,0}=\begin{array}{c}
(-1\,\|\,0\,|\,0\,|\,1)+(-1\,\|\,0\,|\,1\,|\,0)\\ \oplus\\
(1\,\|\,{-1}\,|\,0\,|\,0)+(1\,\|\,0\,|\,{-1}\,|\,0)
\end{array}\end{equation}
where $(-1\,\|\,0\,|\,0\,|\,1)+(-1\,\|\,0\,|\,1\,|\,0)$, for example, means
that this is a rank $2$ bundle with composition factors as indicated,
equivalently that there is an exact sequence
$$0\to(-1\,\|\,0\,|\,1\,|\,0)\to(-1\,\|\,0\,|\,0\,|\,1)+(-1\,\|\,0\,|\,1\,|\,0)
\to(-1\,\|\,0\,|\,0\,|\,1)\to 0.$$
The procedure for computing direct images is explained in~\cite{mge} and here 
we find
$$\nu_*(-1\,\|\,0\,|\,0\,|\,1)=(-1\,\|\,0,0,1)\qquad
\nu_*(1\,\|\,{-1}\,|\,0\,|\,0)=(1\,\|\,{-1},0,0)$$
with all other direct images vanishing (e.g.~$(-1\,\|\,0\,|\,1\,|\,0)$ is
singular along the fibres of~$\nu$). Bearing in mind that the fibres of $\nu$ 
coincide with those of $\tau$ over~$M$, we have proved the following.
\begin{lemma}
For the double fibration \eqref{balanced} and $\Lambda_\mu^{1,0}$ defined on 
$X$ by the exact sequence~\eqref{bizarro}, we have
$$\tau_*\Lambda_\mu^{1,0}=\Lambda_M^1\quad\mbox{and all higher direct images 
vanish.}$$
\end{lemma}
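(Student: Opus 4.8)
The plan is to assemble the lemma from the pieces already in place in the excerpt. By Proposition~\ref{bizarre_notation_explained}, the bundle $\Lambda_\mu^{1,0}$ on $X$ defined by the exact sequence~\eqref{bizarro} agrees with the restriction to $X$ of the honest $(1,0)$-forms along the fibres of $\mu$ on the complexified incidence variety~${\mathbb{X}}$. Moreover, as remarked just after that proposition, the holomorphic structure along the fibres of $\tau$ matches the standard one along the fibres of $\mu$. Hence the direct images $\tau_*^q\Lambda_\mu^{p,0}$ over $M\hookrightarrow{\mathbb{M}}$ are computed by first computing the direct images $\nu_*^q\Lambda_\mu^{1,0}$ on ${\mathbb{M}}$ as ${\mathrm{GL}}(n+1,{\mathbb{C}})$-homogeneous bundles, and then restricting to $M$; this uses that the fibres of $\nu$ coincide with those of $\tau$ over~$M$.

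First I would record the decomposition~\eqref{key_bundle} of $\Lambda_\mu^{1,0}$ into its four composition factors (written out in the excerpt for $n=3$, with the general case only notationally heavier), together with the two nonzero fibre-direction direct images $\nu_*(-1\,\|\,0\,|\,0\,|\,1)=(-1\,\|\,0,0,1)$ and $\nu_*(1\,\|\,{-1}\,|\,0\,|\,0)=(1\,\|\,{-1},0,0)$ and the vanishing of every other direct image of every composition factor. The fibres of $\nu$ are flag manifolds (here ${\mathbb{F}}_{1,n-1}({\mathbb{C}}^n)$, by Proposition~\ref{thefibres}), so these are Bott--Borel--Weil computations on the fibre carried out exactly as in~\cite{mge}: one checks for each composition factor whether the relevant weight is regular or singular, reads off the unique cohomology degree when regular, and concludes. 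Next I would run the two short exact sequences of composition factors for the two rank-two summands through the long exact sequences of direct images; since in each case the ``interior'' factor $(-1\,\|\,0\,|\,1\,|\,0)$, respectively $(1\,\|\,0\,|\,{-1}\,|\,0)$, has all direct images zero, the long exact sequence degenerates and gives $\nu_*$ of each rank-two summand equal to $(-1\,\|\,0,0,1)$, respectively $(1\,\|\,{-1},0,0)$, with all higher $\nu_*^q$ vanishing. Summing the two summands yields $\nu_*\Lambda_\mu^{1,0}=(-1\,\|\,0,0,1)\oplus(1\,\|\,{-1},0,0)$ with no higher direct images.

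Finally I would restrict to $M\hookrightarrow{\mathbb{M}}$: since $M\hookrightarrow{\mathbb{M}}$ is totally real and, by~\eqref{Lambda_M^1}, the cotangent bundle of $M$ is precisely the restriction of $(-1\,\|\,0,0,1)\oplus(1\,\|\,{-1},0,0)$ — being the analytic continuation of $\Lambda_M^1=\Lambda_M^{0,1}\oplus\Lambda_M^{1,0}$ — we obtain $\tau_*\Lambda_\mu^{1,0}=\Lambda_M^1$ with all higher direct images vanishing, which is the claim. The main obstacle is purely bookkeeping: one must unwind the weight conventions $(a\,\|\,b\,|\,c\,|\,d)$ on ${\mathbb{X}}$ and $(a\,\|\,b,c,d)$ on ${\mathbb{M}}$ carefully enough to confirm the singularity/regularity assertions for each composition factor and to verify that the surviving direct images assemble into $\Lambda_M^1$ rather than some twist of it; once the conventions from~\cite{mge} are fixed this is routine, and there is no genuine analytic difficulty since base change for direct images holds here (the fibrations are homogeneous, so the direct images are genuine homogeneous vector bundles and restriction to $M$ commutes with $\nu_*$).
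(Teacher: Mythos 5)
Your proposal is correct and takes essentially the same route as the paper: identify $\Lambda_\mu^{1,0}$ on $X$ with the restriction of the corresponding bundle on the ${\mathrm{GL}}(n+1,{\mathbb{C}})$-homogeneous complexification via Proposition~\ref{bizarre_notation_explained}, compute the direct images of the composition factors in \eqref{key_bundle} by the Bott--Borel--Weil procedure of \cite{mge}, and restrict to $M$ using that the fibres of $\nu$ coincide with those of $\tau$ over~$M$. The only difference is that you make explicit the long exact sequence argument showing the singular composition factors do not contribute, a step the paper leaves implicit.
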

{From} (\ref{key_bundle}) and the algorithms in \cite{mge} the higher forms are
$$\Lambda_\mu^{2,0}=(-2\,\|\,0\,|\,1\,|\,1)\oplus
\bigg[(0\,\|\,{-1}\,|\,0\,|\,1)+
\!\!\!\begin{array}{c}
(0\,\|\,0\,|\,{-1}\,|\,1)\\ \oplus\\
(0\,\|\,{-1}\,|\,1\,|\,0)
\end{array}\!\!\!
+(0\,\|\,0\,|\,0\,|\,0)\bigg]
\oplus (2\,\|\,{-1}\,|\,{-1}\,|\,0)$$
and therefore
$$\nu_*\Lambda_\mu^{2,0}=(-2\,\|\,0,1,1)\oplus(0\,\|\,{-1},0,1)\oplus
(0\,\|\,0,0,0)\oplus(2\,\|\,{-1},{-1},0)=\Lambda_M^2$$
with all higher direct images vanishing. Next,
$$\Lambda_\mu^{3,0}=\begin{array}{c}
(-1\,\|\,{-1}\,|\,1\,|\,1)+(-1\,\|\,0\,|\,0\,|\,1)\\ \oplus\\
(1\,\|\,{-1}\,|\,{-1}\,|\,1)+(1\,\|\,{-1}\,|\,0\,|\,0)
\end{array}$$
whence
$$\nu_*\Lambda_\mu^{3,0}=\begin{array}{c}
(-1\,\|\,{-1},1,1)\oplus(-1\,\|\,0,0,1)\\ \oplus\\
(1\,\|\,{-1},{-1},1)\oplus(1\,\|\,{-1},0,0)
\end{array}=\begin{array}{c}\Lambda_M^{1,2}\\ \oplus\\
\Lambda_M^{2,1}\end{array}$$
with all higher direct images vanishing. Finally,
$$\Lambda_\mu^{4,0}=(0\,\|\,{-1}\,|\,0\,|\,1)\enskip\implies\enskip
\nu_*\Lambda_\mu^{4,0}=(0\,\|\,{-1},0,1)=\Lambda_{M,\perp}^{2,2},$$
where $\Lambda_{M,\perp}^{2,2}$ denotes the $(2,2)$-forms orthogonal to 
$\kappa\wedge\kappa$ where $\kappa$ is the K\"ahler form on 
$M={\mathbb{CP}}_3$. Again, the higher direct images vanish.

Feeding all this information into the spectral sequence of
Theorem~\ref{advanced} causes it to collapse to an identification of the
involutive cohomology $H^r(\Gamma(X,\Lambda_X^{0,\bullet}),\bar\partial_X)$ as 
the global cohomology of the elliptic complex 
\begin{equation}\label{elliptic_complex}0\to\Lambda^0\xrightarrow{d}
\Lambda^1\xrightarrow{d}
\Lambda^2\to
\begin{array}{c}\Lambda^{1,2}\\[-2pt] \oplus\\ 
\Lambda^{2,1}\end{array}
\to\Lambda_\perp^{2,2}
\to 0\end{equation}
on ${\mathbb{CP}}_3$ and from
Proposition~\ref{computation_involutive_cohomology} we deduce the following.
\begin{theorem}
The complex \eqref{elliptic_complex} is exact on ${\mathbb{CP}}_3$ except at 
$\Lambda^0$ and $\Lambda^2$, where its cohomology is canonically identified 
with~${\mathbb{C}}$.  
\end{theorem}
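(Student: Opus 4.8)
The plan is to combine the two spectral sequences developed in \S\ref{general} and the direct-image computations just carried out. The logical flow is: the direct images $\tau_*^q\Lambda_\mu^{p,0}$ have all been identified, and crucially every higher direct image ($q\geq 1$) vanishes, while the zeroth direct images $\tau_*\Lambda_\mu^{p,0}$ are exactly the bundles $\Lambda^0$, $\Lambda^1$, $\Lambda^2$, $\Lambda^{1,2}\oplus\Lambda^{2,1}$, $\Lambda_\perp^{2,2}$ appearing in \eqref{elliptic_complex}. First I would invoke Theorem~\ref{advanced}: since $E_1^{p,q}=\Gamma(M,\tau_*^q\Lambda_\mu^{p,0})=0$ for all $q\geq 1$, the spectral sequence \eqref{SS} is concentrated in the row $q=0$, hence degenerates at $E_2$, giving
$$H^r\big(\Gamma(X,\Lambda_X^{0,\bullet}),\bar\partial_X\big)\cong
H^r\big(\Gamma(M,\tau_*\Lambda_\mu^{\bullet,0}),d\big),$$
the cohomology of the complex \eqref{elliptic_complex} on ${\mathbb{CP}}_3$. (One should note in passing why the induced differential $E_1^{p,0}\to E_1^{p+1,0}$ is the exterior derivative $d$ rather than some other first-order operator: it is the operator $\bar\partial_\tau$ on the direct images, which under the totally-real identification $\Lambda_M^1\cong\Lambda_{\mathbb{M}}^{1,0}|_M$ from Proposition~\ref{bizarre_notation_explained} restricts to the de~Rham $d$; alternatively, it is forced to be $d$ by $\mathrm{SU}(4)$-invariance and the fact that it squares to zero.)

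Next I would feed in Proposition~\ref{computation_involutive_cohomology}, which asserts that $H^r(\Gamma(X,\Lambda_X^{0,\bullet}),\bar\partial_X)$ equals ${\mathbb{C}}$ for $r=0,2$ (the range $0\leq r\leq 2n-4=2$ when $n=3$) and vanishes otherwise. Combining the two displays, the cohomology of \eqref{elliptic_complex} is ${\mathbb{C}}$ in degrees $0$ and $2$ and zero in degrees $1,3,4$. Since the terms of \eqref{elliptic_complex} sit in degrees $0$ through $4$, degree $0$ is the position $\Lambda^0$ and degree $2$ is the position $\Lambda^2$, which is exactly the claimed statement. Ellipticity of the complex is needed only to guarantee that the global cohomology is finite-dimensional and behaves well, but here it actually drops out for free since we have computed the cohomology outright; still, it is worth remarking (as the statement of \eqref{elliptic_complex} already does) that the symbol sequence is exact, so the complex is genuinely elliptic.

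I expect the only real subtlety to be the identification of the $E_1$-differentials with the maps written in \eqref{elliptic_complex} — that is, checking that the first-order operators produced by the machinery are the ones a reader would expect from the names of the bundles (the de~Rham $d$ in low degrees, and the appropriate projected/twisted versions in the middle). This is the step where one must "carefully unravel the meaning of the symbols in terms of weights," exactly as was done for the bundles $\Lambda_\mu^{p,0}$ themselves; in principle it is a finite check, and it is forced by equivariance once one knows the complex is a complex and has the stated composition factors. Everything else is bookkeeping: the vanishing of higher direct images makes the spectral sequence \eqref{SS} collapse immediately, and Proposition~\ref{computation_involutive_cohomology} supplies the answer on the nose. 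So the proof is essentially "apply Theorem~\ref{advanced}, observe the collapse, and compare with Proposition~\ref{computation_involutive_cohomology}," with the identification of the differentials being the one place that requires genuine (though routine) work.
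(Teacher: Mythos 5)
Your proposal is correct and follows essentially the same route as the paper: the vanishing of all higher direct images collapses the spectral sequence \eqref{SS} to the single row $q=0$, identifying the involutive cohomology $H^r(\Gamma(X,\Lambda_X^{0,\bullet}),\bar\partial_X)$ with the cohomology of \eqref{elliptic_complex}, and Proposition~\ref{computation_involutive_cohomology} (with $2n-4=2$ for $n=3$) then gives ${\mathbb{C}}$ in degrees $0$ and $2$ and zero elsewhere. Your added remarks on pinning down the $E_1$-differentials by equivariance are a reasonable elaboration of what the paper leaves implicit.
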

\noindent In fact, the K\"ahler form on ${\mathbb{CP}}_3$ generates the
cohomology at~$\Lambda^2$. It is interesting to compare
(\ref{elliptic_complex}) with the complex that emerges from the Penrose
transform of $H^r({\mathbb{F}}_{1,2}({\mathbb{C}}^4),{\mathcal{O}})$ under the
submersion ${\mathbb{F}}_{1,2}({\mathbb{C}}^4)\to{\mathbb{CP}}_3$ as computed
in~\cite{me}, namely
$$0\to\Lambda^0\xrightarrow{d}
\Lambda^1\to
\begin{array}{c}\Lambda^{0,2}\\[-2pt] \oplus\\ 
\Lambda_\perp^{1,1}\end{array}
\to\Lambda_\perp^{1,2}
\to 0,$$
which is exact except for the constants at~$\Lambda^0$. The complex 
(\ref{elliptic_complex}) is better balanced with respect to type, as one would 
expect.

As a simple variation on this theme, one can consider a similar transform for
the Dolbeault cohomology of $Z={\mathbb{F}}_{1,3}({\mathbb{C}}^4)$ but having
coefficients in any complex homogeneous line bundle or, indeed, vector bundle
on~$Z$. Following the notation of~\cite{mge}, let us next consider the
homogeneous line bundle $(1\,|\,0,0\,|\,0)$ on~$Z$. The only additional
difficulty that must be addressed is that ${\mathbb{F}}_{1,3}({\mathbb{C}}^4)$,
as it appears in~(\ref{homogeneous}), is not written in standard form. 
Specifically, we have 
$$\setlength{\arraycolsep}{3pt}
\renewcommand{\arraystretch}{.8}
{\mathrm{GL}}(4,{\mathbb{C}})\Big/
\left\{\left[\begin{array}{cccc}
*&0&*&*\\
*&*&*&*\\
*&0&*&*\\
0&0&0&*\end{array}\right]\right\}
\quad\mbox{rather than}\quad
{\mathrm{GL}}(4,{\mathbb{C}})\Big/
\left\{\left[\begin{array}{cccc}
*&*&*&*\\
0&*&*&*\\
0&*&*&*\\
0&0&0&*\end{array}\right]\right\}.$$
But these two realisations are equivalent under conjugation by
\begin{equation}\label{sigma1}\setlength{\arraycolsep}{3pt}
\renewcommand{\arraystretch}{.8}
\left[\begin{array}{cccc}
0&1&0&0\\
1&0&0&0\\
0&0&1&0\\
0&0&0&1\end{array}\right]\end{equation}
and, as explained in~\cite{me,EW}, the effect of this conjugation is that the 
formula for pulling back a homogeneous vector bundle from $Z$ to 
${\mathbb{X}}$ includes the action of the Weyl group element represented 
by~(\ref{sigma1}). Specifically,
$$\mu^*(a\,|\,b,c\,|\,d)=(b\,\|\,a\,|\,c\,|\,d)+\cdots$$
and, in particular,
\begin{equation}\label{pullback_of_our_line_bundle}
\mu^*(1\,|\,0,0\,|\,0)=(0\,\|\,1\,|\,0\,|\,0).\end{equation}
This bundle on ${\mathbb{X}}$ makes its effect felt in modifying the spectral
sequence~(\ref{SS}) as
$$E_1^{p,q}=\Gamma(M,\tau_*^q(\Lambda_\mu^{p,0}
     \otimes(0\,\|\,1\,|\,0\,|\,0)|_X))\Longrightarrow
H^{p+q}(\Gamma(X,\Lambda_X^{0,\bullet}
     \otimes(0\,\|\,1\,|\,0\,|\,0)|_X),\bar\partial_X)$$
and also the spectral sequence (\ref{topology}) as applied in proving
Proposition~\ref{computation_involutive_cohomology}. In fact, since
$H^r({\mathbb{F}}_{1,3}({\mathbb{C}}^4),{\mathcal{O}}(1\,|\,0,0\,|\,0))=0$ for
all~$r$ (as a particular instance of the Bott-Borel-Weil Theorem~\cite{bott}), 
following the proof of Proposition~\ref{computation_involutive_cohomology} 
demonstrates the following.
\begin{proposition}
Concerning the double fibration~\eqref{balanced}, we have
$$H^r(\Gamma(X,\Lambda_X^{0,\bullet}
               \otimes(0\,\|\,1\,|\,0\,|\,0)|_X)),\bar\partial_X)=0
\enskip\forall\,r.$$
\end{proposition}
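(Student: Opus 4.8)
The plan is to follow the proof of Proposition~\ref{computation_involutive_cohomology} verbatim, now carrying the line bundle $(0\,\|\,1\,|\,0\,|\,0)|_X$ along as a coefficient. The first point to record is that this bundle is pulled back from $Z$: since $\mu$ restricted to $X$ is nothing but $\eta$, formula~\eqref{pullback_of_our_line_bundle} gives $(0\,\|\,1\,|\,0\,|\,0)|_X=\eta^{*}W$, where $W={\mathcal{O}}(1\,|\,0,0\,|\,0)$ is the homogeneous line bundle on $Z={\mathbb{F}}_{1,3}({\mathbb{C}}^4)$. In particular $\eta^{*}W$ admits transition functions that are constant along the fibres of $\eta$, hence is annihilated by $d_\eta$, so the coupled versions of the complexes and spectral sequences of \S\ref{general} are all available.

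Next I would set up the coefficient version of the spectral sequence~\eqref{topology}, filtering the complex $\Gamma(X,\Lambda_X^{0,\bullet}\otimes\eta^{*}W)$ exactly as in \S\ref{general}. Because $d_\eta$ kills $\eta^{*}W$, the $E_0$-differential in the $q$-direction is $d_\eta\otimes\mathrm{id}_{\eta^{*}W}$, so the $d_\eta$-cohomology of each row is computed from the de~Rham cohomology of the fibres of $\eta$ coupled with $\eta^{*}(\Lambda_Z^{0,p}\otimes W)$. By Proposition~\ref{thefibres} these fibres are ${\mathbb{CP}}_1$ (recall $n=3$ here) and, just as in the proof of Proposition~\ref{computation_involutive_cohomology}, the fibrewise K\"ahler form identifies their de~Rham cohomology canonically with ${\mathbb{C}}$ in degrees $q=0,2$ and with $0$ otherwise; the extra tensor factor $\eta^{*}W$ plays no role in this identification. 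Thus the $E_1$-page has $\Gamma(Z,\Lambda_Z^{0,p}\otimes W)$ in positions $(p,0)$ and $(p,2)$ and $0$ elsewhere, with horizontal differential $\bar\partial_Z$ coupled with $W$.

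Taking horizontal cohomology then gives $E_2^{p,q}=H^{p}(Z,{\mathcal{O}}(W))$ for $q=0,2$ and $E_2^{p,q}=0$ otherwise. The Bott--Borel--Weil Theorem~\cite{bott} shows $H^{p}(Z,{\mathcal{O}}(1\,|\,0,0\,|\,0))=0$ for every $p$---after the $\rho$-shift the relevant weight is singular---so $E_2=0$, the spectral sequence collapses, and $H^{r}(\Gamma(X,\Lambda_X^{0,\bullet}\otimes\eta^{*}W),\bar\partial_X)=0$ for all $r$, which is the assertion.

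The only genuinely new input beyond the machinery already in place is the pair of facts (i) that the coefficient bundle is $\eta^{*}$ of a bundle on $Z$, so that $d_\eta$ annihilates it and the row-by-row computation of Proposition~\ref{computation_involutive_cohomology} carries over unchanged, and (ii) the Bott--Borel--Weil vanishing $H^{p}(Z,{\mathcal{O}}(1\,|\,0,0\,|\,0))=0$. I expect (ii)---really just the verification that $(1\,|\,0,0\,|\,0)$ lands in a singular chamber---to be the only step requiring any care; everything else is bookkeeping identical to the uncoupled case.
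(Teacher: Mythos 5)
Your proposal is correct and follows the paper's own route: the paper likewise couples the spectral sequence \eqref{topology} with the pulled-back line bundle, notes that $(0\,\|\,1\,|\,0\,|\,0)|_X=\eta^*{\mathcal{O}}(1\,|\,0,0\,|\,0)$, and reduces everything to the Bott--Borel--Weil vanishing $H^r({\mathbb{F}}_{1,3}({\mathbb{C}}^4),{\mathcal{O}}(1\,|\,0,0\,|\,0))=0$ for all $r$ (the $\rho$-shifted weight is indeed singular). You have simply written out in detail the steps the paper compresses into ``following the proof of Proposition~\ref{computation_involutive_cohomology}.''
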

\noindent Therefore, the spectral sequence
$$E_1^{p,q}=\Gamma(M,\tau_*^q(\Lambda_\mu^{p,0}
     \otimes(0\,\|\,1\,|\,0\,|\,0)|_X))$$
converges to zero. It remains to compute the bundles involved and for this we 
may proceed as before, instead computing
$$\nu_*^q(\Lambda_\mu^{p,0}\otimes(0\,\|\,1\,|\,0\,|\,0))\quad\mbox{for }
     \nu:{\mathbb{X}}\to{\mathbb{M}}$$
and then restricting to ${\mathbb{CP}}_3=M\hookrightarrow{\mathbb{M}}$. This 
is a matter of combining (\ref{pullback_of_our_line_bundle}) with 
(\ref{key_bundle}) and applying the Bott-Borel-Weil Theorem as formulated 
in~\cite{mge}. 
\begin{proposition} The direct images 
$\nu_*^q(\Lambda_\mu^{p,0}\otimes(0\,\|\,1\,|\,0\,|\,0))$ vanish for $q\geq 1$ 
and for $q=0$ are as follows
\begin{equation}\label{table}\begin{array}{|c|c|c|c|c|}
p=0&p=1&p=2&p=3&p=4\\ \hline
\rule{0pt}{10pt}
0&0&(-2\,\|\,1,1,1)&(-1\,\|\,0,1,1)\oplus(1\,\|\,0,0,0)&(0\,\|\,0,0,1)
\end{array}\,.\end{equation}
\end{proposition}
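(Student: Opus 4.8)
The plan is to rerun, now with coefficients, the very computation carried out on the preceding pages for the bundles $\Lambda_\mu^{p,0}$. Those bundles have already been written above, for $p=0,1,\dots,4$, as extensions of the irreducible homogeneous line bundles $(a\,\|\,b\,|\,c\,|\,d)$ on~$\mathbb{X}$. Since $(0\,\|\,1\,|\,0\,|\,0)$ is itself a line bundle, tensoring by it leaves the composition series intact and merely twists each factor, so that every occurrence of $(a\,\|\,b\,|\,c\,|\,d)$ in \eqref{key_bundle} and its higher analogues is replaced by $(a\,\|\,b+1\,|\,c\,|\,d)$; this is the content of combining those formulae with \eqref{pullback_of_our_line_bundle}. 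I would then feed each of the resulting composition factors through the Bott--Borel--Weil algorithm of~\cite{mge} for the direct image along $\nu\colon\mathbb{X}\to\mathbb{M}$, whose fibres are $\mathbb{F}_{1,2}(\mathbb{C}^3)$: each twisted factor is either singular along the fibres of~$\nu$, so that all of its direct images vanish, or else it is fibrewise regular and contributes a single irreducible homogeneous bundle on~$\mathbb{M}$ in exactly one cohomological degree, which here is always degree zero or degree one.

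The delicate step is assembling these contributions into $\nu_*^q(\Lambda_\mu^{p,0}\otimes(0\,\|\,1\,|\,0\,|\,0))$, and this is where the twisted calculation differs in character from the untwisted one. In the untwisted case every surviving contribution sat in degree zero, so the composition series simply added up; here the twist turns several factors that were fibrewise singular into fibrewise regular but non-dominant ones, whose contribution now appears in degree one. Running the long exact sequences of the composition-series filtration, the degree-zero contribution of an upper factor is carried, by the connecting homomorphism, isomorphically onto the degree-one contribution of a lower factor, and both then disappear. That these connecting maps are isomorphisms is forced: they are $\mathrm{GL}(4,\mathbb{C})$-equivariant maps between copies of one and the same irreducible homogeneous bundle on~$\mathbb{M}$, hence by Schur's lemma either zero or invertible, and they cannot be zero because the composition-series presentations delivered by~\cite{mge} record genuinely non-split extensions. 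The outcome is that for $p=0,1$ all direct images vanish, while for $p=2,3,4$ exactly the bundles of \eqref{table} remain and nothing survives in positive degree.

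The step I expect to be the main obstacle is precisely this bookkeeping of cancellations: keeping the direction of each filtration straight, matching each degree-zero survivor against the degree-one term that kills it, and verifying the non-vanishing of the relevant connecting maps --- equivalently, checking that the spectral sequence of the filtered bundle really degenerates to \eqref{table} and not to something larger. By contrast the Bott--Borel--Weil inputs themselves are routine, the only point to watch being that $\mathbb{F}_{1,3}(\mathbb{C}^4)$ appears in \eqref{homogeneous} in the non-standard realisation whose Weyl-group correction has already been absorbed into \eqref{pullback_of_our_line_bundle}, so that it does not intrude again when computing direct images along~$\nu$.
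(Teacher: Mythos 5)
Your overall strategy coincides with the paper's: twist each composition factor of $\Lambda_\mu^{p,0}$ by $(0\,\|\,1\,|\,0\,|\,0)$, push each factor down by Bott--Borel--Weil along $\nu$, and then resolve the competition between a degree-zero contribution of one factor and a degree-one contribution of an adjacent one. The gap is in how you resolve that competition. You assert that the connecting homomorphisms, e.g.\ $\nu_*(1\,\|\,0\,|\,0\,|\,0)\to\nu_*^1(1\,\|\,1\,|\,{-1}\,|\,0)$, are nonzero ``because the composition-series presentations record genuinely non-split extensions.'' Schur's lemma does reduce the question to zero-or-isomorphism, but non-splitness of the extension on ${\mathbb{X}}$ does not force the connecting map to be nonzero: that map is controlled by the restriction of the extension class to the fibres of $\nu$ (and by a cup product on the fibre $\mathbb{F}_{1,2}(\mathbb{C}^3)$), and a globally non-split extension can perfectly well split fibrewise --- for instance when its class is pulled back from ${\mathbb{M}}$ --- in which case the connecting map vanishes and both terms survive. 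Since it is precisely these cancellations that empty the $p=0,1$ columns of \eqref{table} and prune the $p=2$ column down to $(-2\,\|\,1,1,1)$, this step cannot be waved through; the paper itself flags it as requiring ``further work.''

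The paper closes the gap by a concrete computation rather than an abstract principle: from the diagram \eqref{final_basic_diagram} it extracts the short exact sequence
$0\to(1\,\|\,1\,|\,0\,|\,{-1})\to\nu^*(1\,\|\,{-1},0,0)\otimes(0\,\|\,1\,|\,0\,|\,0)\to(1\,\|\,0\,|\,0\,|\,0)+(1\,\|\,1\,|\,{-1}\,|\,0)\to 0$,
observes that the sub-line-bundle has all direct images zero (two of $b,c+1,d+2$ coincide) and that the middle term has all direct images zero by the projection formula (every direct image of $(0\,\|\,1\,|\,0\,|\,0)$ vanishes), and concludes that the two-step quotient has vanishing direct images --- equivalently, that the connecting homomorphism \eqref{connecting} is an isomorphism. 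You would need to supply this argument, or an equivalent computation of the fibrewise extension class, for $p=1$ and again for the analogous pair $(0\,\|\,0\,|\,0\,|\,1)$, $(0\,\|\,1\,|\,{-1}\,|\,1)$ at $p=2$. The rest of your proposal --- the twist $b\mapsto b+1$ of each factor, the Weyl-group correction already absorbed into \eqref{pullback_of_our_line_bundle}, and the routine Bott--Borel--Weil inputs for $p=3,4$ --- matches the paper.
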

\begin{proof}
According to the Bott-Borel-Weil Theorem, some particular direct images are
$$\begin{array}{rcl}
\nu_*(a\,\|\,b\,|\,c\,|\,d)&=&(a\,\|\,b,c,d)\enskip\mbox{if }b\leq c\leq d\\
\nu_*^1(a\,\|\,b\,|\,c\,|\,d)&=&(a\,\|\,b+1,c-1,d)\enskip\mbox{if }
b+1\leq c-1\leq d\\
\nu_*^1(a\,\|\,b\,|\,c\,|\,d)&=&(a\,\|\,b,d+1,c-1)\enskip\mbox{if }
b\leq d+1\leq c-1\\
\end{array}$$
and, in these cases, all other direct images vanish. Furthermore, 
$(a\,\|\,b\,|\,c\,|\,d)$ has all direct images vanishing if any two of 
$b,c+1,d+2$ coincide. This will be sufficient for our purposes. In particular,
$$\nu_*^q(\Lambda_\mu^{0,0}\otimes(0\,\|\,1\,|\,0\,|\,0))=
\nu_*^q(0\,\|\,1\,|\,0\,|\,0)=0\enskip\forall\,q.$$

Next,
$$\Lambda_\mu^{1,0}\otimes(0\,\|\,1\,|\,0\,|\,0)=
\begin{array}{c}
(-1\,\|\,1\,|\,0\,|\,1)+(-1\,\|\,1\,|\,1\,|\,0)\\ \oplus\\
(1\,\|\,0\,|\,0\,|\,0)+(1\,\|\,1\,|\,{-1}\,|\,0)
\end{array}$$
so
\begin{equation}\label{need_to_compute}
\nu_*^q(\Lambda_\mu^{1,0}\otimes(0\,\|\,1\,|\,0\,|\,0))=
\nu_*^q((1\,\|\,0\,|\,0\,|\,0)+(1\,\|\,1\,|\,{-1}\,|\,0)).\end{equation}
This requires further work since we need to know the connecting 
homomorphism
\begin{equation}\label{connecting}\begin{array}{ccc}
\nu_*(1\,\|\,0\,|\,0\,|\,0)&\to&\nu_*^1(1\,\|\,1\,|\,{-1}\,|\,0)\\
\|&&\|\\
(1\,\|\,0,0,0)&\xrightarrow{\,?\,}&(1\,\|\,0,0,0)
\end{array}\end{equation}
induced by this extension. For this, we consult again the 
diagram~(\ref{final_basic_diagram}), finding that the bottom row in case of 
(\ref{homogeneous}) with $n=3$ is
$$\setlength{\arraycolsep}{2pt}
\begin{array}{ccccccccl}
0&\to&{\mathcal{C}}^{1,0}&\to&\nu^*\Lambda_{\mathbb{M}}^{1,0}&\to&
\Lambda_\mu^{1,0}&\to&0\\
&&\|&&\|&&\|\\
0&\to&\begin{array}{c}
(-1\,\|\,1\,|\,0\,|\,0)\\
\oplus\\
(1\,\|\,0\,|\,0\,|\,{-1})
\end{array}&\to&
\nu^*\left[\begin{array}{c}(-1\|0,0,1)\\ \oplus\\ (1\|{-1},0,0)\end{array}\right]
&\to&\begin{array}{c}
(-1\,\|\,0\,|\,0\,|\,1)+(-1\,\|\,0\,|\,1\,|\,0)\\ \oplus\\
(1\,\|\,{-1}\,|\,0\,|\,0)+(1\,\|\,0\,|\,{-1}\,|\,0)
\end{array}&\to& 0\,,\end{array}$$
which, in particular, yields the short exact sequence
$$0\to(1\,\|\,0\,|\,0\,|\,{-1})\to\nu^*(1\|{-1},0,0)\to
(1\,\|\,{-1}\,|\,0\,|\,0)+(1\,\|\,0\,|\,{-1}\,|\,0)\to 0$$
and, therefore, when tensored with $(0\,\|\,1\,|\,0\,|\,0)$ the short exact 
sequence
$$0\to(1\,\|\,1\,|\,0\,|\,{-1})\to
\nu^*(1\|{-1},0,0)\otimes(0\,\|\,1\,|\,0\,|\,0)\to
(1\,\|\,0\,|\,0\,|\,0)+(1\,\|\,1\,|\,{-1}\,|\,0)\to 0$$
from which it follows that all the direct images (\ref{need_to_compute}) vanish
(equivalently, that the connecting homomorphism (\ref{connecting}) is an 
isomorphism, as one might expect).

Next, we should compute the direct images of 
$\Lambda_\mu^{2,0}\otimes(0\,\|\,1\,|\,0\,|\,0)$, i.e.~of
$$(-2\,\|\,1\,|\,1\,|\,1)\oplus
\bigg[(0\,\|\,0\,|\,0\,|\,1)+
\!\!\!\begin{array}{c}
(0\,\|\,1\,|\,{-1}\,|\,1)\\ \oplus\\
(0\,\|\,0\,|\,1\,|\,0)
\end{array}\!\!\!
+(0\,\|\,1\,|\,0\,|\,0)\bigg]
\oplus (2\,\|\,0\,|\,{-1}\,|\,0).$$
The induced connecting homomorphism 
$\nu_*(0\,\|\,0\,|\,0\,|\,1)\to\nu_*^1(0\,\|\,1\,|\,{-1}\,|\,1)$ is again an 
isomorphism by similar reasoning and only $(-2\,\|\,1\,|\,1\,|\,1)$ 
contributes to the direct images, as claimed in~(\ref{table}).

Next, 
$$\Lambda_\mu^{3,0}\otimes(0\,\|\,1\,|\,0\,|\,0)=
\begin{array}{c}
(-1\,\|\,0\,|\,1\,|\,1)+(-1\,\|\,1\,|\,0\,|\,1)\\ \oplus\\
(1\,\|\,0\,|\,{-1}\,|\,1)+(1\,\|\,0\,|\,0\,|\,0)
\end{array}$$
and, finally,
$$\Lambda_\mu^{4,0}\otimes(0\,\|\,1\,|\,0\,|\,0)=(0\,\|\,0\,|\,0\,|\,1)$$
from which the rest of (\ref{table}) is immediate.
\end{proof}

\noindent Assembling these various computations yields the following.
\begin{theorem}\label{assembled_theorem}
There is an elliptic and globally exact complex on\/~${\mathbb{CP}}_3$
\begin{equation}\label{globally_exact}
\setlength{\arraycolsep}{2pt}\begin{array}{ccccccccc}
0&\to&(-2\,\|\,1,1,1)&\to&
(-1\,\|\,0,1,1)
&\to&(0\,\|\,0,0,1)&\to&0.\\ &&&&\oplus&\nearrow\\ 
&&&&(1\,\|\,0,0,0)\end{array}\end{equation}
\end{theorem}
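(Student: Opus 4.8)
The plan is to run the same machinery as in the proof of Theorem~\ref{assembled_theorem}'s unlabelled predecessor, now with the twist bundle $(1\,|\,0,0\,|\,0)$ on $Z$ switched on throughout. First I would invoke the preceding Proposition, which gives the $E_1$-page of the spectral sequence~\eqref{SS} coupled with $(0\,\|\,1\,|\,0\,|\,0)|_X$ as the single row displayed in~\eqref{table}; since all higher direct images vanish, the $E_1$-page is concentrated in $q=0$ and the spectral sequence degenerates, so the differentials $d_1$ are simply the connecting maps of the short exact sequences induced by the filtration of $\Lambda_X^{0,\bullet}$, and reading off the $q=0$ row of~\eqref{table} produces exactly the complex~\eqref{globally_exact} of first-order operators on ${\mathbb{CP}}_3$, computing $H^{p}(\Gamma(X,\Lambda_X^{0,\bullet}\otimes(0\,\|\,1\,|\,0\,|\,0)|_X),\bar\partial_X)$ in degree $p$.

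Next I would combine this with the earlier Proposition asserting that the twisted involutive cohomology $H^r(\Gamma(X,\Lambda_X^{0,\bullet}\otimes(0\,\|\,1\,|\,0\,|\,0)|_X),\bar\partial_X)$ vanishes for all $r$. This vanishing comes from the twisted version of the spectral sequence~\eqref{topology}, whose $E_1$-page has rows $\Gamma(Z,\Lambda^{p,0}\otimes(1\,|\,0,0\,|\,0))$ against the de~Rham cohomology of the ${\mathbb{CP}}_1$-fibres of $\eta$, and then from the Bott--Borel--Weil vanishing $H^r({\mathbb{F}}_{1,3}({\mathbb{C}}^4),{\mathcal{O}}(1\,|\,0,0\,|\,0))=0$ for all $r$; the same reasoning kills every entry of each row's cohomology, so the whole $E_\infty$-page is zero. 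Since the spectral sequence~\eqref{SS} (twisted) converges to this vanishing target and has already been seen to be concentrated on the line $q=0$, its $E_2=E_\infty$-page must vanish, which is precisely the statement that~\eqref{globally_exact} is exact as a complex of global sections on ${\mathbb{CP}}_3$.

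Finally I would note ellipticity: the symbol of~\eqref{globally_exact} at a nonzero covector $\xi$ is, fibre by fibre, the $E_0$-differential $\bar\partial_\tau$ acting on the restricted complex $\Lambda_\mu^{\bullet,0}\otimes(0\,\|\,1\,|\,0\,|\,0)$, which is a Koszul-type (in fact $\bar\partial$ on the twistor fibre) complex; exactness of that symbol complex for $\xi\neq0$ is what makes the direct-image computation clean in the first place, and it is exactly the ellipticity statement. Alternatively one can cite that the operators arise as the $d_1$ of a spectral sequence built from the elliptic operator $\bar\partial_X$ on the compact(-fibred) $X$, forcing the resulting complex on $M$ to be elliptic. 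The main obstacle I expect is bookkeeping rather than conceptual: one must make sure the connecting homomorphisms really are the ones computed in the preceding proof (e.g.\ that the map $(-1\,\|\,0,1,1)\oplus(1\,\|\,0,0,0)\to(0\,\|\,0,0,1)$ is nonzero on each summand and not, say, vanishing on the $(1\,\|\,0,0,0)$ component), so that no spurious extra cohomology survives — this is the point at which the isomorphism of connecting maps noted in the preceding proof (the ``as one might expect'' remarks) has to be used honestly.
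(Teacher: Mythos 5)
Your reduction of global exactness to the two preceding Propositions is exactly what the paper does: the twisted spectral sequence \eqref{SS} has $E_1$-page the single row \eqref{table}, it converges to the twisted involutive cohomology, and that cohomology vanishes by Bott--Borel--Weil, so the row must be exact as a complex of global sections on~${\mathbb{CP}}_3$. The paper's proof opens with ``Everything is shown save for the following two observations,'' and your first two paragraphs are precisely the already-shown part.

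The gap lies in the two observations the paper actually has to supply. First, the \emph{shape} of \eqref{globally_exact}: a priori the differential $E_1^{2,0}\to E_1^{3,0}$ could have a component $(-2\,\|\,1,1,1)\to(1\,\|\,0,0,0)$, and the theorem as drawn asserts it does not. The paper rules this out by noting that $\Lambda_M^1\otimes(-2\,\|\,1,1,1)=(-3\,\|\,1,1,2)\oplus(-1\,\|\,0,1,1)$ contains no copy of $(1\,\|\,0,0,0)$, so no ${\mathrm{SU}}(4)$-invariant first-order symbol exists. You raise the analogous worry only for the second operator, and your proposed resolution---that the ``connecting maps computed in the preceding proof'' pin these differentials down---conflates two different maps: the connecting homomorphisms in the preceding Proposition are internal to the computation of $\nu_*^q$ of a single $\Lambda_\mu^{p,0}\otimes(0\,\|\,1\,|\,0\,|\,0)$ (between its composition factors), not the $d_1$-differentials of \eqref{SS} between different values of~$p$. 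Second, ellipticity. Your first route (the symbol is ``fibre by fibre the $E_0$-differential $\bar\partial_\tau$, a Koszul-type complex'') confuses symbols in the fibre directions of $\tau$, which govern the direct-image computation, with symbols in the base directions on $M$, which govern ellipticity of the resulting complex; your second route (``$d_1$ of a spectral sequence built from an elliptic $\bar\partial_X$ is elliptic'') is not a theorem---ellipticity of Penrose-type complexes genuinely can fail and must be verified. The paper verifies it concretely, by decomposing $\Lambda_M^1\otimes(-1\,\|\,0,1,1)$ and $\Lambda_M^1\otimes(1\,\|\,0,0,0)$ into irreducibles and checking exactness of the resulting ${\mathrm{SU}}(4)$-equivariant symbol sequence by Schur's lemma. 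Both missing points are short, but they are the entire content of the paper's proof.
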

\begin{proof} Everything is shown save for the following two observations.
Firstly, there is no possible first order differential operator
$(-2\,\|\,1,1,1)\to(1\,\|\,0,0,0)$ since there is no possible 
${\mathrm{SU}}(4)$-invariant symbol. Indeed, from (\ref{Lambda_M^1}), we have
$$\Lambda_M^1\otimes(-2\,\|\,1,1,1)=(-3\,\|\,1,1,2)\oplus
(-1\,\|\,0,1,1).$$
Similar symbol considerations
$$\begin{array}{rcl}
\Lambda_M^1\otimes(-1\,\|\,0,1,1)&=&(-2\,\|\,0,1,2)\oplus(-2\,\|\,1,1,1)
\oplus(0\,\|\,{-1},1,1)\oplus(0\,\|\,0,0,1)\\
\Lambda_M^1\otimes(1\,\|\,0,0,0)&=&(0\,\|\,0,0,1)\oplus(2\,\|\,{-1},0,0)
\end{array}$$
also allow one to check that the complex is elliptic. 
\end{proof}

Invariance under ${\mathrm{SU}}(4)$ identifies the operators
explicitly. Specifically, if we denote by $L$ the homogeneous line bundle 
$L=(-2\,\|\,1,1,1)$, then (\ref{globally_exact}) becomes
$$\begin{array}{ccccccccc}
0&\to&L&\xrightarrow{\,\partial\,}&
\Lambda_M^{1,0}\otimes L
&\xrightarrow{\,\partial\,}&\Lambda_M^{2,0}\otimes L&\to&0.\\ &&&&\oplus&&
\makebox[0pt][l]{$\downarrow\!\!\rule[3pt]{.5pt}{4pt}\;
{\scriptstyle\kappa\wedge\kappa\wedge\underbar{\;}}$}\\ 
&&&&\Lambda_M^{3,0}\otimes L&\xrightarrow{\,\bar\partial}
&\Lambda_M^{3,1}\otimes L\end{array}$$
As a check, Theorem~\ref{assembled_theorem} says that $L$ has no global
anti-holomorphic sections and this is certainly true because its complex
conjugate $(2\,\|\,{-1},{-1},{-1})$ has no global holomorphic sections (it is
the homogeneous holomorphic bundle $(2\,|\,{-1},{-1},{-1})$ in the notation
of~\cite{mge}, which has singular infinitesimal character).

Other homogeneous holomorphic bundles on the twistor space
$Z={\mathbb{F}}_{1,n}({\mathbb{C}}^{n+1})$ will give rise to other invariant
complexes of differential operators on~${\mathbb{CP}}_n$. The author suspects
that the holomorphic tangent bundle $\Theta$ will give rise to an especially 
interesting complex (since $H^1(Z,\Theta)$ parameterises the 
infinitesimal deformations of~$Z$ as a complex manifold). Unfortunately, he 
has not yet been able to complete the calculation in this case.

\section{Another particular transform}\label{flag}
This section is concerned with an instance of the holomorphic double fibration
transform as formulated in general in~\cite{FHW}. Specifically, let us consider
the complex flag manifold $Z={\mathbb{F}}_{1,n}({\mathbb{C}}^{n+1})$ under the
action of ${\mathrm{SU}}(n,1)$. There are three open orbits for this action,
easily described in terms of geometry in~${\mathbb{CP}}_n$. The orbits of
${\mathrm{SU}}(n,1)$ acting on ${\mathbb{CP}}_n$ are the open ball $B$, its
boundary, and the complement of its closure. As in \S\ref{intro}
and~\S\ref{particular}, an element $(L,H)$ in $Z$ may be viewed as a point on
a hyperplane in~${\mathbb{CP}}_n$. The three open orbits are given by the 
following restrictions.
\begin{itemize}
\item the point $L$ lies in the ball~$B$,
\item the hyperplane $H$ lies outside the ball $B$,
\item the point $L$ lies outside $B$ but the hyperplane $H$ intersects~$B$.
\end{itemize}
The set of hyperplanes lying outside $B$ defines an open subset in the dual
projective space~${\mathbb{CP}}_n^*$, which we may identify with~$\bar{B}$, 
i.e.~the ball $B$ with its conjugate complex structure. 

Let us consider the third of the options above for the open orbits of
${\mathrm{SU}}(n,1)$ acting on $Z$ and call it~$D$. By definition it is a 
{\em flag domain}\/. Following the notation of~\cite{FHW}, its 
{\em cycle space\/} ${\mathcal{M}}_D$ is $B\times\bar{B}$ inside
${\mathbb{CP}}_n\times{\mathbb{CP}}_n^*$ and the correspondence space 
${\mathfrak{X}}_D$ is exactly $\nu^{-1}({\mathcal{M}}_D)$ for the complexified 
correspondence of~\S\ref{particular}. Thus, we have an open inclusion
$$\raisebox{-20pt}{\begin{picture}(55,40)
\put(0,5){\makebox(0,0){$D$}}
\put(30,35){\makebox(0,0){${\mathfrak{X}}_D$}}
\put(60,5){\makebox(0,0){${\mathcal{M}}_D$}}
\put(25,30){\vector(-1,-1){18}}
\put(35,30){\vector(1,-1){18}}
\put(12,24){\makebox(0,0){$\mu$}}
\put(48,24){\makebox(0,0){$\nu$}}
\end{picture}}
\qquad\mbox{\LARGE${}^{\scriptscriptstyle\mathrm{open}}\!\hookrightarrow$}
\hspace{30pt}
\raisebox{-20pt}{\begin{picture}(55,40)
\put(0,5){\makebox(0,0){${\mathbb{F}}_{1,n}({\mathbb{C}}^{n+1})$}}
\put(30,35){\makebox(0,0){${\mathbb{X}}$}}
\put(55,5){\makebox(0,0)[l]{${\mathbb{M}}$}}
\put(25,30){\vector(-1,-1){18}}
\put(35,30){\vector(1,-1){18}}
\put(12,24){\makebox(0,0){$\mu$}}
\put(48,24){\makebox(0,0){$\nu$}}
\end{picture}}$$
and, in particular, the fibres of $\nu$ over ${\mathcal{M}}_D$ coincide
with the fibres of $\nu:{\mathbb{X}}\to{\mathbb{M}}$ restricted
to~${\mathcal{M}}_D$. The spectral sequence~\cite{BE,EW} for the resulting double fibration
transform starting with a holomorphic vector bundle $E$ on $D$ reads
\begin{equation}\label{usualSS}
E_1^{p,q}=\Gamma({\mathcal{M}}_D,\nu_*^q(\Lambda_\mu^{p,0}\otimes\mu^*E))
\Longrightarrow H^{p+q}(D,{\mathcal{O}}(E))\end{equation}
under the assumptions that ${\mathcal{M}}_D$ is Stein and that
$\mu:{\mathfrak{X}}_D\to D$ has contractible fibres, both of which are true 
for any flag domain~\cite{FHW} and directly seen to be the case here. 

Observe that the terms in this spectral sequence (when $E$ is trivial) are
almost the same as in~(\ref{SS}). Certainly, the direct image bundles may be
obtained by working on the homogeneous correspondence~(\ref{homogeneous}) and
then restricting to~${\mathcal{M}}_D$. As our final example, let us carry this
out for $n=3$ and for $E$ being the canonical bundle on~$D$. This is precisely
the restriction to $D$ of the homogeneous line bundle $(3\,|\,0,0\,|\,{-3})$
on~$Z$. Following exactly the procedures of~\S\ref{particular} we find the
following homogeneous bundles for
$\Lambda_\mu^{p,0}\otimes\mu^*(3\,|\,0,0\,|\,{-3})=
\Lambda_\mu^{p,0}\otimes(0\,\|\,3\,|\,0\,|\,{-3})$.\footnotesize
$$\begin{array}{r|l}p=0&(0\,\|\,3\,|\,0\,|\,{-3})\\ \hline
p=1&\hspace{-5pt}\begin{array}{c}
(-1\,\|\,3\,|\,0\,|\,{-2})+(-1\,\|\,3\,|\,1\,|\,{-3})\\ \oplus\\
(1\,\|\,2\,|\,0\,|\,{-3})+(1\,\|\,3\,|\,{-1}\,|\,{-3})\hspace{5pt}
\end{array}\\ \hline
p=2&(-2\,\|\,3\,|\,1\,|\,{-2})\oplus
\bigg[(0\,\|\,2\,|\,0\,|\,{-2})+
\!\!\!\begin{array}{c}
(0\,\|\,3\,|\,{-1}\,|\,{-2})\\ \oplus\\
(0\,\|\,2\,|\,1\,|\,{-3})
\end{array}\!\!\!
+(0\,\|\,3\,|\,0\,|\,{-3})\bigg]
\oplus (2\,\|\,2\,|\,{-1}\,|\,{-3})\\ \hline
p=3&\hspace{-5pt}\begin{array}{c}
(-1\,\|\,2\,|\,1\,|\,{-2})+(-1\,\|\,3\,|\,0\,|\,{-2})\\ \oplus\\
(1\,\|\,2\,|\,{-1}\,|\,{-2})+(1\,\|\,2\,|\,0\,|\,{-3})\hspace{5pt}
\end{array}\\ \hline
p=4&(0\,\|\,2\,|\,0\,|\,{-2})
\end{array}$$\normalsize
Using the Bott-Borel-Weil Theorem, as formulated in~\cite{mge}, we find that 
the only non-zero direct images 
$\nu_*^q\big(\Lambda_\mu^{p,0}\otimes\mu^*(3\,|\,0,0\,|\,{-3})\big)$ are when 
$q=3$ as follows.\footnotesize
$$\begin{array}{r|l}p=0&(0\,\|\,{-1},0,1)\\ \hline
p=1&\hspace{-5pt}\begin{array}{c}
(-1\,\|\,0,0,1)\oplus(-1\,\|\,{-1},1,1)\\ \oplus\hspace{6.8pt}\\
(1\,\|\,{-1},0,0)\oplus(1\,\|\,{-1},{-1},1)
\end{array}\\ \hline
\rule[-4pt]{0pt}{12pt}p=2&(-2\,\|\,0,1,1)\oplus
\big[(0\,\|\,0,0,0)\oplus(0\,\|\,{-1},0,1)\big]
\oplus (2\,\|\,{-1},{-1},0)\\ \hline
p=3&\hspace{-5pt}\begin{array}{c}
(-1\,\|\,0,0,1)\\ \oplus\\
(1\,\|\,{-1},0,0)
\end{array}\\ \hline
p=4&(0\,\|\,0,0,0) \end{array}$$\normalsize 
We conclude, for example, from (\ref{usualSS}) that
$H^3(D,{\mathcal{O}}(3\,|\,0,0\,|\,{-3}))$ is realised on 
$${\mathcal{M}}_D=B\times\bar{B}\subset{\mathbb{CP}}_3\times{\mathbb{CP}}_3^*$$
as the kernel of the holomorphic differential operator
$$(0\,\|\,{-1},0,1)\begin{picture}(32,0)(0,-3)
\put(15,10){\makebox(0,0){$\bar\eth$}}
\put(15,-10){\makebox(0,0){$\eth$}}
\put(5,1){\vector(3,1){25}}
\put(5,-1){\vector(3,-1){25}}
\end{picture}
\setlength{\arraycolsep}{2pt}\begin{array}{ccc}
(-1\,\|\,0,0,1)&\oplus&(-1\,\|\,{-1},1,1)\\ &\oplus\\
(1\,\|\,{-1},0,0)&\oplus&(1\,\|\,{-1},{-1},1)
\end{array}$$
where $\eth$ (respectively~$\bar\eth$) denotes holomorphic differentiation in the 
direction of ${\mathbb{CP}}_3$ (respectively~${\mathbb{CP}}_3^*$) followed by 
projection to the indicated bundles:
$$\begin{array}{rcl}(0\,\|\,{-1},0,1)&\longrightarrow&
\Lambda_{{\mathbb{CP}}_3}^{1,0}\otimes(0\,\|\,{-1},0,1)\\
&=&(1\,\|\,{-1},0,0)\otimes(0\,\|\,{-1},0,1)\\
&=&(1\,\|\,{-2},0,1)\oplus(1\,\|\,{-1},{-1},1)\oplus(1\,\|\,{-1},0,0)\\
&\twoheadrightarrow&(1\,\|\,{-1},{-1},1)\oplus(1\,\|\,{-1},0,0).
\end{array}$$

It is interesting to note that the entire complex 
$\nu_*^3\big(\Lambda_\mu^{\bullet,0}\otimes\mu^*(3\,|\,0,0\,|\,{-3})\big)$ on 
${\mathbb{M}}$ is the analytic continuation of the elliptic complex
$$0\to\Lambda_\perp^{1,1}\begin{array}{c}\nearrow\\ \searrow\end{array}
\begin{array}{c}\Lambda^{1,2}\\ \oplus\\ \Lambda^{2,1}\end{array}
\begin{array}{c}\nearrow\\ \searrow\\ \nearrow\\ \searrow\end{array}
\begin{array}{c}\Lambda^{1,3}\\ \oplus\\ \Lambda^{2,2}\\ \oplus\\ 
\Lambda^{3,1}\end{array}
\begin{array}{c}\searrow\\ \nearrow\\ \searrow\\ \nearrow\end{array}
\begin{array}{c}\Lambda^{2,3}\\ \oplus\\ \Lambda^{3,2}\end{array}
\begin{array}{c}\searrow\\ \nearrow\end{array}\Lambda^{3,3}\to 0$$
on~$M={\mathbb{CP}}_3$ and that this complex is the formal adjoint 
of~(\ref{elliptic_complex}), exactly as predicted by duality 
\cite[Theorem~4.1]{EW}. The transform described in this section is an 
example of the much more general theory developed in~\cite{EW_in_prep}.

\bibliographystyle{amsplain}

\end{document}